\theoremstyle{plain}
  \newtheorem{theorem}{Theorem}[section]
  \newtheorem{proposition}{Proposition}[section]
  \newtheorem{corollary}{Corollary}[section]
\theoremstyle{definition}
  \newtheorem{definition}{Definition}[section]
  \newtheorem{example}{Example}[section]
  \newtheorem{remark}{Remark}[section]
\newif\if@borderstar
\def\bordermatrix{\@ifnextchar*{%
\@borderstartrue\@bordermatrix@i}{\@borderstarfalse\@bordermatrix@i*}%
}
\def\@bordermatrix@i*{\@ifnextchar[{\@bordermatrix@ii}{\@bordermatrix@ii[()]}}
\def\@bordermatrix@ii[#1]#2{%
\begingroup
\m@th\@tempdima8.75\p@\setbox\z@\vbox{%
\def\cr{\crcr\noalign{\kern 2\p@\global\let\cr\endline }}%
\ialign {$##$\hfil\kern 2\p@\kern\@tempdima &\thinspace %
\hfil $##$\hfil &&\quad\hfil $##$\hfil\crcr\omit\strut %
\hfil\crcr\noalign{\kern -\baselineskip}#2\crcr\omit %
\strut\cr}}%
\setbox\tw@\vbox{\unvcopy\z@\global\setbox\@ne\lastbox}%
\setbox\tw@\hbox{\unhbox\@ne\unskip\global\setbox\@ne\lastbox}%
\setbox\tw@\hbox{%
$\kern\wd\@ne\kern -\@tempdima\left\@firstoftwo#1%
\if@borderstar\kern2pt\else\kern -\wd\@ne\fi%
\global\setbox\@ne\vbox{\box\@ne\if@borderstar\else\kern 2\p@\fi}%
\vcenter{\if@borderstar\else\kern -\ht\@ne\fi%
\unvbox\z@\kern-\if@borderstar2\fi\baselineskip}%
\if@borderstar\kern-2\@tempdima\kern2\p@\else\,\fi\right\@secondoftwo#1$%
}\null \;\vbox{\kern\ht\@ne\box\tw@}%
\endgroup
}
\begin{document}
\begin{frontmatter}

\title{Some properties of fuzzy $t$-norm and vague $t$-norm}
\author{Haohao Wang\fnref{label1}}

\author{Bin Yang\fnref{label1}\corref{cor1}}
\ead{binyang0906@nwsuaf.edu.cn}

\author{Wei Li\fnref{label1}}

 \address[label1]{College of Science, Northwest A \& F University, Yangling, Shaanxi 712100, PR China}
\cortext[cor1]{Corresponding author.}
\begin{abstract}
	Rosenfeld defined a fuzzy subgroup of group $G$ as a fuzzy subset of $G$ with two special conditions attached\cite{Rosenfeld1971Fuzzysubgroups}. In this paper, we introduce the fuzzy $t$-norms and vague $t$-norms. The unit interval with a $t$-norm or a $t$-conorm is a special monoid, so we mainly talk about fuzzy subsets of monoids and vague monoids.
Firstly, we  generalize some properties of $t$-norm to fuzzy $t$-subnorm, so that we can analyze and classify the fuzzy $t$-norms. Further, we explore specific research on these properties of the vague $t$-norm. In addition,  the concept of lattice is introduced, and then the present conclusions are extended to bounded lattices.
Finally, we define the concept of fuzzy monoids by aggregate functions, uninorms, nullnorms and draw the relevant results.	
\end{abstract}

\begin{keyword}
$t$-norm; $t$-conorm; uninorm; nullnorm; aggregation function; fuzzy monoid; vague monoid
\end{keyword}

\end{frontmatter}

\section{Introduction}
The fuzzification or relaxation of logical connectives can be  effectively applied in solving practical problems such as imprecision, lack of accuracy, or the presence of noise. Rosenfeld firstly defined a fuzzy subgroup of group $G$ which is  a fuzzy subset of $G$ with two special conditions attached\cite{Rosenfeld1971Fuzzysubgroups}. On this basis, many mathematicians have obtained rich results\cite{Somepropertiesoffuzzygroups, Fuzzygroupsandlevelsubgroups, Fuzzyfunctionsandtheirfundamentalproperties}. Considering that many scholars are very interested in fuzzified algebraic structures, including groups, rings, actions, they begin to search for fuzzy cases of various algebraic structures based on Rosenfeld's method\cite{Boixader2018Fuzzyactions,Rosenfeld1971Fuzzysubgroups}.

In Rosenfeld's related work, only a subset of the group $G$ is fuzzy, but the operations on it are still classical, so Mustafa Demirci proposed the idea of  fuzzifying the operations on the group, which uses tools such as fuzzy equality and fuzzy functions\cite{VagueGroups}.

There is evidence that $t$-norm and $t$-conorm as two kinds of fuzzy logic operations play a crucial role in fuzzy sets theory \cite{Zadeh1965Fuzzysets,Zimmermann1991FuzzySet}. To further enrich the properties of the aggregation operators, Yager and Rybalov proposed the concpet of uninorms \cite{Yager1996Uninormaggregation,Fodor1997Structure}. The identity element of uninorm can take any number in the unit interval, not just zero and one in the case of $t$-norms and $t$-conorms. In addition, Calvo et al. introduced the notion of nullnorms in 2001\cite{Calvo2001Thefunctionalequations}. Uninorms and nullnorms are generalizations of $t$-norms and $t$-conorms, and on the other side, there exists close connection between them.

In this paper, we will develop the fuzzification of $t$-norms and $t$-conorms from two perspectives:
\begin{enumerate}[(1)]
	\item Extending properties of $t$-norms to fuzzy $t$-norms, vague $t$-norms and bounded lattices.
	\item  Generalizing the idea of fuzzy $t$-norms to aggregation functions, uninorms and nullnorms.
\end{enumerate}

The first part of this paper studies the fuzzy $t$-norms and vague $t$-norms, then introduces the fuzzification of properties of $t$-norms, such as strict monotonicity, cancellation law, conditional cancellation law, Archimedean and limit property. In the case of $t$-norms, these properties are closely related to each other. After fuzzification, some inter-pushing relationships are still maintained.  The unit interval with $t$-norm and $t$-conorm is a special monoid. However, little is known about the specific situation of fuzzy $t$-norm. We generalize some properties of $t$-norm to it, so that  we can analyze and classify the fuzzy $t$-subnorm. We also explore specific research on these properties on the vague $t$-norm. And then, the concept of lattice is introduced and the content of the previous two subsections  is generalized to the bounded lattice\cite{boundedlattice}.

In the definition of fuzzy groups given by Rosenfeld, the conjunctive operation $\wedge$ is used, which can be replaced by the fuzzy operator such as $t$-norms to get more flexible definitions of fuzzy subset\cite{Vagueandfuzzyt-normsandt-conorms}. Therefore, the last section uses aggregation functions, uninorms and nullnorms to replace the conjunctive operation $\wedge$ as replacements. Thus, we define the concept of fuzzy monoids by aggregate functions, uninorms, nullnorms and obtain the relevant results.

The remainder of this paper is organized as follows. In Section \ref{SEC2}, some basic concepts and properties of  fuzzy set theory and algebraic structure are introduced. In Section \ref{SEC3},  the fuzzification of properties such as strict monotonicity, cancellation law, conditional cancellation law, Archimedean and limit property  are introduced on the fuzzy $t$-norms. In Section \ref{SEC4}, vague $t$-norms and their properties are proposed. Section \ref{SEC5} introduces the concept of lattice, and further generalize the content of previous two sections to bounded lattice. Section \ref{SEC6} defines the concept of fuzzy monoids by aggregate functions, uninorms, nullnorms and obtains relevant results. Finally, we state some conclusions in section \ref{SEC7}.

\section{Preliminaries}\label{SEC2}

This section recapitulates some well-known concepts that shall be used in the sequel.

\begin{definition}\cite{tnorm}\label{d:$t$-norm}
	A $t$-norm is a binary operation $T:[0,1]^2\to [0,1]$, for all $x,y,z\in [0,1]$, which satisfies the following conditions:
	\begin{enumerate}[(T1)]
		\item Commutativity:
		$T(x,y)=T(y,x)$;
		\item Associativity:
		$T(T(x,y),z)=T(x,T(y,z))$;
		\item Monotonicity:
		$T$ is non-decreasing in each argument;
		\item Boundary condition:
		$T(x,1)=x.$
	\end{enumerate}	 	
\end{definition}
If we focus on the algebraic structure, the binary operation $t$-norm with prefix expression $T$  can also be expressed by the infix binary operator $*$, then the four axioms (T1)-(T4) can be followed as:
\begin{align*}
	&	x*y=y*x;\\
	&	x*y\leq x*z, \text{if }  y\leq z;\\
	&	(x*y)*z= x*(y*z);\\
	&	1*x=x.
\end{align*}

\begin{remark}
	The associativity (T2) allows us to extend each $t$-norm $T$ in a unique way to a $n$-ary operation in the usual way by induction, defining for each $n$-tuple $(x_1,x_2,\cdots,x_n) \in [0,1]^n$
	$$T(x_1,x_2,\cdots,x_n)=T(T(x_1,x_2,\cdots,x_{n-1}),x_n).$$
	If, in particular, we have $x_1=x_2=\cdots =x_n=x$, we shall briefly write
	$$x_T^{(n)}=T(x,x,\cdots,x).$$
	Finally we put, by convention, for each $ x \in [0,1]$
	$$x_T^{(0)}=1 \text{ and } x_T^{(1)}=x.$$
\end{remark}

\begin{example}\label{e:$t$-norm}
	The following shows four common $t$-norms:
	\begin{enumerate}[(1)]
		\item
		Minimum: $T_M(x, y) = \textnormal{min}(x, y)$,
		\item
		Product: $T_P(x, y) = xy$,
		\item
		{\L}ukasiewicz $t$-norm: $T_L(x, y) = \textnormal{max}(x+y-1,0)$,
		\item
		Drastic product:\\
		$T_D(x,y) =\left\{
		\begin{aligned}
			&0,&if\ x,y\in [0,1), \\
			&\textnormal{min}(x,y),&otherwise.
		\end{aligned}
		\right.$
	\end{enumerate}
\end{example}

%
%
%

\begin{definition}\cite{tnorm}\label{d:t-conorm}
	A $t$-conorm is a binary operation $S:[0,1]^2\to [0,1]$, for all $x,y,z\in [0,1]$, which satisfies the following conditions:
	\begin{enumerate}[(S1)]
		\item  Commutativity:
		$S(x,y)=S(y,x)$;
		\item Associativity:
		$S(S(x,y),z)=S(x,S(y,z))$;
		\item Monotonicity:
		$S$ is non-decreasing in each argument;
		\item  Boundary condition:
		$S(x,0)=x.$	
	\end{enumerate}	
\end{definition}

\begin{example}\label{e:$t$-norm}
	The following shows four common $t$-conorms:
	\begin{enumerate}[(1)]
		\item
		Maximum: $S_M(x, y) = \textnormal{max}(x, y)$,
		\item
		Probabilistic sum: $S_P(x, y) = x+y-xy$,
		\item
		{\L}ukasiewicz $t$-conorm: $S_L(x, y) = \textnormal{min}(x+y,1)$,
		\item
		Drastic sum:\\
		$S_D(x,y) =\left\{
		\begin{aligned}
			&1&if\ x,y\in (0,1], \\
			&\textnormal{max}(x,y)&otherwise.
		\end{aligned}
		\right.$
	\end{enumerate}
\end{example}

\begin{definition}\cite{tnorm}
	For an arbitrary $t$-norm $T$, we consider the following properties for all $x,y,z\in [0,1]$:
	\begin{enumerate}[(1)]
		\item   $T$ is said to be strictly monotone if
		$$ T(x,y)<T(x,z) \text{  whenever } x>0\text{ and }y<z.$$
		\item   $T$ satisfies the cancellation law if
		$$T(x,y)=T(x,z)\Rightarrow x=0 \text{ or } y=z.$$
		\item   $T$ satisfies the conditional cancellation law if
		$$T(x,y)=T(x,z)>0 \Rightarrow y=z.$$
		\item  $T$ is called Archimedean if
		$$ \forall (x,y)\in(0,1)^2, \text{ there is an } n\in \mathbb{N} \text{ with }\\ x_T^{(n)}<y.$$
		\item  $T$ has the limit property if
		$$\text{for all } x\in(0,1) , \lim\limits_{n \to \infty} x_T^{(n)}=0.$$
	\end{enumerate}
\end{definition}

\begin{example}
	Notice that $T_L$, $T_P$ are Archimedean $t$-norm, and $T_M$ is not.
\end{example}
In what follows, some algebraic concepts are introduced.
\begin{definition}\cite{VagueGroups}
	Let $X$ be a non-empty set, $\circ$ be an operation on $X$.
	\begin{enumerate}[(1)]
		\item  $X$ together with a binary operation $\circ$, denoted by $ (X,\circ)$ is a semigroup if and only if the following associative property is satisfied, i.e.,
		$$a\circ( b\circ c)=(a\circ b)\circ c, \forall a,b,c \in X.$$
		\item
		A semigroup $(X,\circ)$ is a monoid if and only if  there exists an element $e \in X$, called the (two-sided) identity	element of $(X,\circ)$, such that $e\circ a=a$  and $a\circ e=a$ for each $a \in X$.
		\item	A monoid $(X,\circ)$ is a group if and only if for each $a \in X$, there exists an element of $X$, denoted by $a^{-1}$ and called the (two sided) inverse element of a, such that $a^{-1}\circ a=e$ and $a\circ a^{-1} =e.$
		\item A semigroup $(X,\circ)$ is said to be abelian or commutative if and only if the binary
		operation $\circ$ has the following property:
		$$a\circ b=b\circ a ,\forall a,b\in X.$$
	\end{enumerate}
\end{definition}
From an algebraic point of view, a $t$-norm is actually a commutative monoid with identity element $1$.

\begin{definition}\cite{tnorm}
	An aggregation function is a mapping $A:[0,1]^n\rightarrow [0,1]$, $n$ is a positive integer greater than $1$, such that,
	\begin{enumerate}[(1)]
		\item $A(x_1,\cdots,x_n) \leq A(y_1,\cdots,y_n),\text{ whenever } x_i\leq y_i,\text{ for  all } i\in {1,\cdots,n}.$
		\item $A(0,\cdots,0) = 0 \text{ and } A(1,\cdots, 1) = 1.$
	\end{enumerate}	
\end{definition}
\begin{definition}\label{def3}\cite{tnorm}
	A uninorm is a binary function $U : [0, 1]^2 \rightarrow [0, 1]$, for all $x, y, z \in [0, 1]$, which satisfies the following conditions :
	\begin{enumerate}[(U1)]
		\item  Commutativity:
		$U(x, y) = U(y, x)$;
		\item Associativity:
		$U(U(x, y), z) = U (x,U(y, z))$;
		\item Monotonicity:
		$U$ is non-decreasing in each place;
		\item  Identity element:
		$U(x, e) = x, e\in [0,1]$.	
	\end{enumerate}	
\end{definition}

\begin{remark}
	In particular, a uninorm is a $t$-norm when $e=1$ and a $t$-conorm when $e=0$. For any $e\in (0,1)$, uninorm is equivalent to a $t$-norm in $[0,e]^2$, a $t$-conorm in $[e,1]^2$, and its values in $A(e) = [0,e)\times (e,1] \cup (e,1] \times [0,e)$ have the following form:
	$$\textnormal{min}(x,y)\leqslant U(x,y)\leqslant \textnormal{max}(x,y).$$
\end{remark}

\begin{remark}
	A uninorm $U$ is called conjunctive if $U(1, 0)=0$ and a uninorm $U$ is called disjunctive if $U(1,0)=1$. A conjunctive (resp. disjunctive) uninorm $U$ is said to be locally internal on the boundary if it
	satisfies $U (1, x) \in \{1, x\}$ (resp. $U(0, x) \in \{0, x\}$) for all $x \in [0,1]$.
\end{remark}
\begin{example}
	The following are several common unnorms:
	\begin{enumerate}[(1)]
		\item
		$\mathscr{U}_{min}$, $\mathscr{U}_{max}$: those given by minimum and maximum in $A(e)=\{(x,y):\text{min}(x,y)<e<\text{max}(x,y)\}$, respectively.
		\item
		$\mathscr{U}_{ide}$: $U(x, x)=x$ for all $x \in [0, 1]$.
		\item
		$\mathscr{U}_{rep}$: those that have an additive generator.
		\item
		$\mathscr{U}_{cos}$: Continuous in the open square $(0,1)^2$.
	\end{enumerate}
\end{example}

\begin{theorem}\cite{Fodor1997Structure}
	Let $U:[0, 1]^2\rightarrow [0, 1]$ be a uninorm with neutral element $e \in (0,1)$. Then the sections $x \hookrightarrow U(x, 1)$ and $x \hookrightarrow U(x, 0)$ are continuous in each point except perhaps for $e$ if and only if $U$ is given by one of the following formulas.
	\begin{enumerate}[(1)]
		\item
		If $U(0, 1)= 0$, then
		\begin{align*}
			\small
			U(x,y)=\left\{
			\begin{aligned}
				&eT(\frac{x}{e}, \frac{y}{e})&if(x,y)\in [0,e]^2;\\
				&e+(1-e)S(\frac{x-e}{1-e}, \frac{y-e}{1-e})&if(x,y)\in [e,1]^2;\\
				&\textnormal{min}(x,y)&if(x,y)\in A(e).
			\end{aligned} \right.
		\end{align*}
		The set of uninorms as above will be denoted by $\mathscr{U}_{min}$.
		\item
		If $U(0, 1)=1$, then $\mathscr{U}_{max}$ has the same structure, changing minimum by maximum in $A(e)$.
	\end{enumerate}
\end{theorem}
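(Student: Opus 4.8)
The plan is to prove the two implications separately, treating the conjunctive case $U(0,1)=0$ in full and obtaining the disjunctive case $U(0,1)=1$ by the evident dual argument.

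For the sufficiency direction I would start from the explicit piecewise formula and verify the uninorm axioms (U1)--(U4). Commutativity and monotonicity are inherited region by region from $T$, $S$ and $\min$; the identity $U(x,e)=x$ is checked in the three regions using $T(a,1)=a$ on $[0,e]^2$, $S(a,0)=a$ on $[e,1]^2$, and $\min$ on $A(e)$. The only laborious point is associativity, which must be verified for all placements of a triple across $[0,e]^2$, $[e,1]^2$ and $A(e)$; this is routine case analysis that I would not write out in full. Continuity of the sections is then immediate: on $A(e)$ the formula gives $U(x,1)=\min(x,1)=x$ and $U(x,0)=\min(x,0)=0$ for $x<e$, which are continuous on $[0,e)$, while on $[e,1]$ the $t$-conorm part forces $U(x,1)=1$, so the only possible discontinuity is the jump at $e$.

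For the necessity direction I would first record the structural skeleton. Since $e$ is neutral and $U$ is monotone, $U$ maps $[0,e]^2$ into $[0,e]$ and $[e,1]^2$ into $[e,1]$, so the rescalings $T(a,b)=\tfrac1e U(ea,eb)$ and $S(a,b)=\tfrac1{1-e}\big(U(e+(1-e)a,e+(1-e)b)-e\big)$ are a $t$-norm and a $t$-conorm, all axioms being transported from (U1)--(U4). In particular $U(x,1)=1$ for $x\in(e,1]$ and, elementarily, $U(x,0)\le U(e,0)=0$ for $x\in[0,e]$. On $A(e)$ monotonicity together with $U(x,e)=x$ gives $\min(x,y)\le U(x,y)\le\max(x,y)$. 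Next I would pin down the corner value $\alpha:=U(0,1)$. Associativity yields $U(\alpha,1)=U(U(0,1),1)=U(0,U(1,1))=\alpha$, and since $U(x,1)=1$ on $(e,1]$ while $U(e,1)=1\neq e$, this forces $\alpha\in[0,e)\cup\{1\}$. Symmetrically $\alpha=U(1,0)$ is a value of the section $x\mapsto U(x,0)$, hence a fixed point of it because $U(U(x,0),0)=U(x,U(0,0))=U(x,0)$; as $U(x,0)=0$ on $[0,e]$ this forces $\alpha\in\{0\}\cup(e,1]$. Intersecting, $\alpha\in\{0,1\}$.

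It remains to propagate the corner value over all of $A(e)$, and this is the step where the continuity hypothesis is indispensable and where I expect the real work to lie. Assume $\alpha=0$. The section $g(x):=U(x,1)$ is non-decreasing with $g(x)\ge U(x,e)=x$ and $g(0)=0$; moreover every value $g(x)$ is a fixed point of $g$, since $g(g(x))=U(U(x,1),1)=U(x,U(1,1))=g(x)$, and the only fixed point of $g$ in $[e,1]$ is $1$. Using continuity of $g$ on $[0,e)$, its image is an interval (intermediate value theorem) made of fixed points and containing $0$; by connectedness it cannot meet $[e,1)$ nor reduce to $\{1\}$, so it lies in $[0,e)$, and since $g(x)\ge x$ forces $g(e^-)=e$, the image is exactly $[0,e)$. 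The associativity identity $U(g(x),y)=U(U(x,1),y)=U(x,U(1,y))=U(x,1)=g(x)$, valid for $y\in[e,1]$, then reads $U(w,y)=w$ for every $w\in[0,e)$ and $y\in[e,1]$; that is, $U=\min$ on $[0,e)\times(e,1]$, and commutativity extends this to all of $A(e)$, so $U\in\mathscr{U}_{min}$. The case $\alpha=1$ is handled identically with $x\mapsto U(x,0)$, giving $U\in\mathscr{U}_{max}$. The crux is precisely this use of continuity to guarantee that the section sweeps out the entire interval $[0,e)$: without it the image can skip values, the identity $U(g(x),y)=g(x)$ fails to reach those $w$, and $U$ need not equal $\min$ there, which is exactly how uninorms outside $\mathscr{U}_{min}\cup\mathscr{U}_{max}$ arise.
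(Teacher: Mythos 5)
The paper itself gives no proof of this statement: it is quoted verbatim in the preliminaries from Fodor, Yager and Rybalov \cite{Fodor1997Structure}, so your proposal can only be compared with the original source rather than with anything in this paper. Measured that way, your argument is correct and reconstructs the standard proof in its essential structure: rescaling $U$ on $[0,e]^2$ and $[e,1]^2$ to a $t$-norm $T$ and a $t$-conorm $S$; showing $\alpha=U(0,1)$ is a fixed point of both boundary sections, hence $\alpha\in\{0,1\}$ (note this part needs no continuity, as you observe); and then the propagation step, where continuity of $g(x)=U(x,1)$ on $[0,e)$ plus the fixed-point identity $g\circ g=g$ and the intermediate value theorem force the image of $g$ to be exactly $[0,e)$, after which associativity in the form $U(g(x),y)=g(x)$ for $y\in[e,1]$ yields $U=\min$ on $A(e)$; the disjunctive case is the exact dual via $h(x)=U(x,0)$. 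Your closing observation that this is precisely where the continuity hypothesis is irreducible (and how uninorms outside $\mathscr{U}_{min}\cup\mathscr{U}_{max}$, e.g.\ idempotent ones with locally internal but discontinuous boundary sections, escape the dichotomy) is apt. Two small blemishes, neither fatal: (i) in the sufficiency direction, the point $(x,0)$ with $x<e$ lies in $[0,e]^2$, not in $A(e)$, so the value $U(x,0)=eT(x/e,0)=0$ comes from the $t$-norm block rather than from the $\min$ region (the value coincides, so nothing breaks); (ii) verifying the uninorm axioms, associativity included, for the piecewise formula is superfluous for the statement as given, since $U$ is assumed to be a uninorm on both sides of the equivalence --- that verification belongs to the separate (true, and also tediously case-based) fact that every function of the displayed form is a uninorm.
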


\begin{definition}\label{def4}\cite{tnorm}
	A nullnorm is a binary function $F : [0, 1]^2 \rightarrow [0, 1]$, for all $x, y, z \in [0, 1]$, which satisfies the following conditions:
	
	\begin{enumerate}
		\item [(F1)]
		Commutativity: $F(x, y) = F(y, x)$;
		\item [(F2)]
		Associativity: $F(F(x, y), z) = F(x,F(y, z))$;
		\item [(F3)]
		Monotonicity: $F$ is non-decreasing in each place;
		\item [(F4)]
		Absorbing element: there exists an absorbing element $k \in [0, 1], F(k, x) = k$ and the following statements hold,
		$$F(0, x) = x \text{ for all }  x \le k,$$
		$$F(1, x) = x \text{ for all }  x \ge k.$$
	\end{enumerate} 	
\end{definition}

In general, $k$ is always given by $F(0,1)$.

In addition, some concepts related to fuzzy sets are also necessary.
\begin{definition}\cite{tnorm}
	Let $\mu$ be a mapping from the set $X$ to $[0,1]$, if  $$\mu:X\rightarrow [0,1], x\mapsto \mu(x),$$
	we call $\mu$ the fuzzy subset on $X$  and call $\mu(x)$  the membership function of the fuzzy subset  $\mu$.
	
\end{definition}

The set composed of all fuzzy subsets on $X$ is denoted as $\mathscr{F}(X)$. The classical set and the characteristic function are special cases of the fuzzy subset and the membership function respectively.

\begin{definition}\cite{VagueGroups}
	Let $X, Y$ be two sets, then the fuzzy relation $f$ from $X$ to $Y$ is a direct product $X\times Y=\{(x,y)|x\in X,y \in Y\}$ that is $f\in \mathscr{F}(X\times Y)$
	$$ f:X\times Y\rightarrow [0,1],$$
	where $f(x, y)$ represents the degree of element $x$ has a $f$ relationship with element $y$. Especially, when $X=Y$, $f$ is called a fuzzy relationship on $X$.
\end{definition}

Later we will use some knowledge about lattice.
\begin{definition}\cite{boundedlattice}
	A lattice is a nonempty set $L$ equipped with a partial order $\leq$ such that each two elements $x, y \in L$ have a greatest lower bound, called meet or infimum, denoted by $x \wedge y$, as well as a smallest upper bound, called join or supremum, denoted by $x \vee y$.
\end{definition}

If any number of elements have a smallest upper bound and a greatest lower bound in a lattice $L$, we say that $L$ is complete.  For $x, y\in L$, the symbol $x < y$ means that $x \leq y$ and $x \neq y$. If $x \leq y$ or $y < x$, then we say that $x$ and $y$ are comparable. Otherwise, we say that $x$ and $y$ are incomparable, which denoted as $x \| y$.

The inclusion of two fuzzy sets $\mu,\nu$ on $S$ is defined as follows: $\mu\subseteq \nu$ if and only if for all $x\in S,\mu(x)\leq \nu(x)$. Clearly the set of all fuzzy sets on $S$ is a complete lattice $\mathscr{L}$ under this ordering. We shall denote the supremum and infimum in $\mathscr{L}$ by $\cup$ and $\cap$.

\section{Fuzzy $t$-norm with some properties}\label{SEC3}

\subsection{Fuzzy groupoids and fuzzy groups}
We already know that a $t$-norm is a monoid, so the essence of defining fuzzy $t$-norms is to define fuzzy monoids. In order to introduce the definition of fuzzy $t$-norms, first we need to give the definition of fuzzy  groupoids and groups, which  are proposed by Rosenfeld\cite{Rosenfeld1971Fuzzysubgroups}. At the same time, in order to simplify the statement of the propositional proofs of some fuzzy $t$-norm in the future, some knowledge to be used is given as propositions together with the definitions of fuzzy groupoids and groups.

\begin{definition}\cite{Rosenfeld1971Fuzzysubgroups}                                  
	Let $S$ be a groupoid with binary operation $\circ$, $\mu$ is fuzzy subset of $S$. $\mu$ will be called a fuzzy subgroupoid of $S$ if, for all $x,y \in S$,
	$$\textnormal{min}(\mu(x),\mu(y))\leq \mu(x\circ y). $$
\end{definition}

The classic subgroupiod is actually a special case when fuzzy subset $\mu$ is the characteristic function.
\begin{proposition}
	Let $S$ be a groupoid with binary operation $\circ$, $\mu$ is fuzzy subset of $S$. Let $\mu$ be into \{0, l\}, so that $\mu$ is the characteristic function of a subset  $T \subset S$ if and only if $T$ is a subgroupoid.
\end{proposition}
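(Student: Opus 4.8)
The plan is to unwind both directions of the equivalence directly from the definition of a fuzzy subgroupoid, namely $\min(\mu(x),\mu(y)) \le \mu(x\circ y)$ for all $x,y\in S$, exploiting that $\mu$ is the characteristic function of $T$, so that $\mu(x)=1$ precisely when $x\in T$ and $\mu(x)=0$ otherwise. Recall that $T$ being a subgroupoid means exactly that $T$ is closed under $\circ$, i.e. $x,y\in T$ implies $x\circ y\in T$. The whole argument is then a matter of translating this closure condition into the membership inequality and back.

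For the forward implication, I would assume that $\mu$ is a fuzzy subgroupoid and deduce that $T$ is closed. Fix $x,y\in T$; then $\mu(x)=\mu(y)=1$, so $\min(\mu(x),\mu(y))=1$. The defining inequality forces $\mu(x\circ y)\ge 1$, and since $\mu$ takes values only in $\{0,1\}$ this yields $\mu(x\circ y)=1$, i.e. $x\circ y\in T$. Hence $T$ is closed under $\circ$ and is therefore a subgroupoid.

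For the converse, I would assume $T$ is a subgroupoid and verify the inequality for arbitrary $x,y\in S$ by splitting into two cases according to whether both arguments lie in $T$. If $x,y\in T$, then $\min(\mu(x),\mu(y))=1$ and, by closure, $x\circ y\in T$, so $\mu(x\circ y)=1$ and the inequality holds with equality. If at least one of $x,y$ lies outside $T$, then $\min(\mu(x),\mu(y))=0\le\mu(x\circ y)$ holds trivially because $\mu$ is nonnegative. These two cases are exhaustive, so the inequality holds everywhere and $\mu$ is a fuzzy subgroupoid.

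I do not expect a genuine obstacle here, as the proof reduces to a direct case analysis on the values of $\mu$. The only point deserving a little care is the converse direction, where the inequality must still be checked when the minimum vanishes; there it is satisfied automatically and imposes no constraint on $x\circ y$, which is exactly why the single closure condition on $T$ suffices to recover the full fuzzy-subgroupoid inequality.
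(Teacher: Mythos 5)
Your proof is correct and takes essentially the same approach as the paper: the paper compresses both directions into the single observation that, for a $\{0,1\}$-valued $\mu$, the inequality $\min(\mu(x),\mu(y))\le\mu(x\circ y)$ is equivalent to the implication ``$\mu(x)=\mu(y)=1 \Rightarrow \mu(x\circ y)=1$,'' i.e.\ to closure of $T$. Your explicit case split (minimum equal to $1$ versus $0$) is just the unpacked form of that same equivalence.
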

\begin{proof}
	If $\mu$ is into \{0,1\}, then $\forall x,y\in S$, $\mu(x\circ y)\geq \textnormal{min}(\mu(x),\mu(y))$ is equivalent to
	$\mu(x)=\mu(y)=1$ implies $\mu(x\circ y)=1$, i.e., to $x, y \in T $ implies $x\circ y \in T$.
	
\end{proof}

\begin{example}
	
	Characteristic  functions of the empty set $\emptyset$ and the complete set $S$, $\mu_{id}(x)=0$, and  $\mu_{id}(x)= 1,x\in S$, can be fuzzy subgroupoids of any groupoid $S$, which is easy to verify by definition. We call them trivial fuzzy subgroupoids.
	
\end{example}

It can be found that the intersection of any number of fuzzy subgroupoids is still a fuzzy subgroupoid. The infimum of a set of fuzzy subsets $\{\mu_i\}$ is denoted as $\cap \mu_i$.
\begin{proposition}
	Let $S$ be a groupoid. The intersection of any number of fuzzy subgroupoids of $S$ is still a fuzzy subgroupoid of $S$.
\end{proposition}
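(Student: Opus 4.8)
The plan is to reduce the statement to the pointwise description of the intersection and then chain inequalities. Let $\{\mu_i\}_{i\in I}$ be an arbitrary family of fuzzy subgroupoids of $S$, and write $\nu=\cap_{i\in I}\mu_i$ for their infimum in the complete lattice $\mathscr{L}$ of fuzzy sets on $S$. By the definition of $\cap$ as the infimum in this lattice, $\nu$ is computed pointwise, i.e.\ $\nu(x)=\inf_{i\in I}\mu_i(x)$ for every $x\in S$. It therefore suffices to verify the defining inequality $\textnormal{min}(\nu(x),\nu(y))\leq\nu(x\circ y)$ for arbitrary fixed $x,y\in S$.

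First I would fix $x,y\in S$ together with a single index $i\in I$. Since $\nu(x)=\inf_{j}\mu_j(x)\leq\mu_i(x)$ and likewise $\nu(y)\leq\mu_i(y)$, and since $\textnormal{min}$ is non-decreasing in each argument, we obtain $\textnormal{min}(\nu(x),\nu(y))\leq\textnormal{min}(\mu_i(x),\mu_i(y))$. Because $\mu_i$ is itself a fuzzy subgroupoid, the right-hand side is bounded above by $\mu_i(x\circ y)$. Combining these two facts gives $\textnormal{min}(\nu(x),\nu(y))\leq\mu_i(x\circ y)$, and this holds for every $i\in I$.

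The final step is to pass to the infimum over $i$. The quantity $\textnormal{min}(\nu(x),\nu(y))$ does not depend on $i$ and satisfies $\textnormal{min}(\nu(x),\nu(y))\leq\mu_i(x\circ y)$ for all $i$, so it is a lower bound of the set $\{\mu_i(x\circ y):i\in I\}$. By the universal (greatest-lower-bound) property of the infimum it is therefore at most $\inf_{i\in I}\mu_i(x\circ y)=\nu(x\circ y)$, which is exactly the inequality required of a fuzzy subgroupoid. Hence $\nu$ is a fuzzy subgroupoid of $S$.

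I do not anticipate a genuine obstacle here. The only point that needs care is the passage from ``the inequality holds for each $i$'' to ``the inequality holds for the infimum'', which rests on the greatest-lower-bound property and not on any finiteness of the index set $I$; this is precisely what lets the argument cover an arbitrary, possibly infinite, intersection. The monotonicity of $\textnormal{min}$ and the pointwise formula for $\cap$ are the only other ingredients, both of which are immediate from the definitions recalled in Section~\ref{SEC2}.
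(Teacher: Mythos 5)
Your proof is correct and takes essentially the same route as the paper's: a direct pointwise verification that $\textnormal{min}(\nu(x),\nu(y))\leq \nu(x\circ y)$ via the infimum formula for $\cap\mu_i$. The only cosmetic difference is that the paper's one-line chain uses the interchange identity $\inf_i \textnormal{min}(\mu_i(x),\mu_i(y))=\textnormal{min}(\inf_i\mu_i(x),\inf_i\mu_i(y))$, whereas you bound $\textnormal{min}(\nu(x),\nu(y))$ by each $\mu_i(x\circ y)$ and then invoke the greatest-lower-bound property of the infimum --- the same content, packaged slightly differently.
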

\begin{proof}
	For a set of fuzzy subgroupoids $\{\mu_i \}$, $\forall x,y \in S$,
	\begin{equation*}
		\begin{aligned}
			(\cap \mu_i)(x\circ y)&=\textnormal{inf}[\mu_i(x \circ y)]\\
			&\geq \textnormal{inf}[\textnormal{min}( \mu_i(x),\mu_i(y) )]\\
			&=\textnormal{min}(\textnormal{inf} \mu_i(x),\textnormal{inf} \mu_i(y))\\
			&=\textnormal{min}((\cap \mu_i)(x),(\cap \mu_i)(y)).
		\end{aligned}
	\end{equation*}
\end{proof}

\begin{definition}\cite{Rosenfeld1971Fuzzysubgroups}                                      
	Let $S$ be a group with binary operation $\circ$ and identity element $e$. $\mu$ is a fuzzy subset on $S$. $\mu$ will be called a fuzzy subgroup of the group $S$ if $\mu$ satisfies:
	\begin{enumerate}[(1)]
		\item 	 $\textnormal{min}(\mu(x),\mu(y))\leq \mu(x \circ y), \forall x, y \in S. $
		\item	$\mu(x^{-1})\geq \mu(x),\forall x \in S.$
	\end{enumerate}
\end{definition}

\begin{proposition}
	$S$ is a group and $\mu$ is a fuzzy subset on $S$. If the range of $\mu$ is $\{0,1\}$, that is, $\mu$ is a characteristic function of some subset $T$ of $S$, then $\mu$ is the fuzzy subgroup of $S$ if and only if $T$ is a subgroup of $S$.
\end{proposition}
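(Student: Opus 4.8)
The plan is to exploit the fact that, because $\mu$ has range $\{0,1\}$, each of the two defining conditions of a fuzzy subgroup collapses into a purely set-theoretic statement about $T=\{x\in S:\mu(x)=1\}$. I would therefore prove the biconditional by translating conditions (1) and (2) one at a time and then matching the result against the classical definition of a subgroup, namely closure under $\circ$ together with closure under inversion.

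First I would handle condition (1), $\textnormal{min}(\mu(x),\mu(y))\leq\mu(x\circ y)$. Since $\mu$ takes only the values $0$ and $1$, this is exactly the fuzzy subgroupoid condition, and by the earlier proposition it is equivalent to: $x,y\in T$ implies $x\circ y\in T$, i.e. $T$ is closed under $\circ$. Next I would treat condition (2), $\mu(x^{-1})\geq\mu(x)$; again using that the range is $\{0,1\}$, this says precisely that $x\in T$ implies $x^{-1}\in T$, i.e. $T$ is closed under inversion. Thus ``$\mu$ is a fuzzy subgroup'' is equivalent to ``$T$ is closed under $\circ$ and under inversion.''

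The direction ($\Leftarrow$) is then immediate: a subgroup $T$ is by definition closed under the operation and under inverses, so both translated conditions hold and $\mu$ is a fuzzy subgroup. For ($\Rightarrow$), assuming $T$ is closed under $\circ$ and inversion I would recover the identity by picking any $x\in T$: then $x^{-1}\in T$, and hence $x\circ x^{-1}=e\in T$, so $T$ contains $e$, is nonempty, and—being closed under $\circ$ and inversion, with associativity inherited from $S$—is a subgroup.

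The one point that requires care, and the only real obstacle, is the empty case. If $\mu\equiv0$ then $T=\emptyset$ satisfies conditions (1) and (2) vacuously, so $\mu$ is a (trivial) fuzzy subgroup; yet $\emptyset$ is not a subgroup, since a subgroup must contain $e$. The argument in ($\Rightarrow$) silently used a witness $x\in T$, which fails when $T$ is empty. I would therefore carry out the ($\Rightarrow$) direction under the tacit hypothesis $T\neq\emptyset$ (equivalently $\mu\not\equiv0$), explicitly setting aside the trivial fuzzy subgroup $\mu\equiv0$; with that caveat noted, the equivalence goes through cleanly.
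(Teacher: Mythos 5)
Your proposal is correct and follows essentially the same route as the paper: it translates each defining condition through the $\{0,1\}$-valued hypothesis, invokes the earlier subgroupoid proposition for closure under $\circ$, reads condition (2) as closure under inversion, and recovers the identity from a witness $x\in T$ via $e=x\circ x^{-1}$. Your caveat about $\mu\equiv 0$ is well taken and is in fact a point where you are more careful than the paper: the characteristic function of $T=\emptyset$ satisfies both conditions vacuously while $\emptyset$ is not a subgroup, and the paper's own proof silently assumes nonemptiness at the step ``Taking $x\in T$, so that $\mu(e)=\mu(x\circ x^{-1})\geq \mu(x)=1$,'' so your explicit exclusion of the trivial case $T\neq\emptyset$ is a genuine (if minor) improvement on the published argument.
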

\begin{proof}
	First look at necessity. Since the previous proposition has already proved the closed property, here we only need to look at the associativity, the identity element and the inverse element.
	Associativity can be obtained by inheritance and closure of operation.
	Taking $x\in T$, so that $\mu(e)=\mu(x\circ x^{-1})\geq \mu(x)=1$, that is, the identity element $e\in T$. And for $\forall x\in T,\mu(x^{-1})\geq \mu(x)=1$, we have $\mu(x^{-1})=1$, that is, $x ^{-1}\in T$.
	
	Conversely, $T$ is a subgroup of $S$, then
	$$\forall x\in T,\mu(x^{-1})=\mu(x)=1,$$
	$$\forall x\notin T,\mu(x^{-1})=\mu(x)=0,$$
	
	so $\forall x\in S,\mu(x^{-1})\geq \mu(x)$.
	The rest is known from the previous propositions.
\end{proof}

It can be found that the intersection of any number fuzzy subgroups is still a fuzzy subgroup.
\begin{proposition}
	Let $S$ be a group, and the intersection of any number of fuzzy subgroups of $S$ is still a fuzzy subgroup of $S$.
\end{proposition}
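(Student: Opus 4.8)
The plan is to verify the two defining conditions of a fuzzy subgroup directly for the intersection $\cap \mu_i$ of an arbitrary family $\{\mu_i\}$ of fuzzy subgroups of $S$, recalling that by definition $(\cap \mu_i)(x) = \inf_i \mu_i(x)$ for every $x \in S$.

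First I would dispatch condition (1), the subgroupoid inequality $\min\bigl((\cap\mu_i)(x),(\cap\mu_i)(y)\bigr) \le (\cap\mu_i)(x\circ y)$, for free. Every fuzzy subgroup is in particular a fuzzy subgroupoid, since that inequality is precisely the first axiom in the definition of a fuzzy subgroup. Hence the earlier proposition stating that the intersection of any number of fuzzy subgroupoids of $S$ is again a fuzzy subgroupoid applies verbatim to the family $\{\mu_i\}$, and condition (1) is inherited at once.

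Second I would check the inverse condition (2), namely $(\cap\mu_i)(x^{-1}) \ge (\cap\mu_i)(x)$ for all $x\in S$. For each index $i$ we have $\mu_i(x^{-1}) \ge \mu_i(x) \ge \inf_j \mu_j(x) = (\cap\mu_j)(x)$, so the number $(\cap\mu_j)(x)$ is a lower bound for the set $\{\mu_i(x^{-1})\}_i$; therefore it does not exceed the infimum of that set, giving $(\cap\mu_i)(x^{-1}) = \inf_i \mu_i(x^{-1}) \ge (\cap\mu_j)(x)$, which is exactly what condition (2) requires.

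There is no genuine obstacle here: the statement is a direct consequence of the monotonicity of the infimum together with the already-established closure of fuzzy subgroupoids under intersection. The only point demanding a little care is the order-of-infimum step in condition (2), where one must observe that a bound holding for every member of the family descends to their infimum, rather than attempting to interchange the two infima naively.
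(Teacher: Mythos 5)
Your proposal is correct and follows essentially the same route as the paper: condition (1) is inherited from the earlier proposition on intersections of fuzzy subgroupoids, and condition (2) follows from the pointwise inequality $\mu_i(x^{-1})\geq\mu_i(x)$ passed through the infimum. Your spelled-out lower-bound justification of the infimum step is just a more explicit rendering of the paper's one-line monotonicity-of-infimum computation.
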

\begin{proof}
	For a set of fuzzy subgroups $\{\mu_i \}$, $\forall x\in S$,
	\begin{equation*}
		\begin{aligned}
			(\cap \mu_i)(x^{-1})&=\textnormal{inf}(\mu_i(x^{-1}))\\
			&\geq \textnormal{\textnormal{inf}}(\mu_i(x))\\
			&=(\cap \mu_i)(x).
		\end{aligned}
	\end{equation*}
	The rest is known from the previous propositions.
\end{proof}

Now we present the definition of fuzzy monoids.
\begin{definition}\cite{Vagueandfuzzyt-normsandt-conorms}
	Let $G$ be a monoid with identity element $e$, $\mu$ a fuzzy subset of $G$. $\mu$ will be called  a fuzzy submonoid of $M$ if and only if
	\begin{enumerate}[(1)]
		\item 	 $\textnormal{min}(\mu(x),\mu(y))\leq \mu(x \circ y), \forall x, y \in G. $
		\item	$\mu(e) = 1$.
	\end{enumerate}	
\end{definition}

\subsection{Generalization of some properties of $t$-norm}
With the previous definition of fuzzy monoids, we can naturally define fuzzy $t$-norm, and then we generalize some properties originally belonging to $t$-norm.
\begin{definition}\label{def1}
	Let $G=([0,1], T)$ be a $t$-norm, $\mu$ a fuzzy subset of $G$. $\mu$ will be called  a fuzzy $t$-subnorm of $G$ if and only if
	\begin{enumerate}[(1)]
		\item 	 $\textnormal{min}(\mu(x),\mu(y))\leq \mu(T(x,y)), \forall x, y \in [0,1]. $
		\item	$\mu(1) = 1$.
	\end{enumerate}	
\end{definition}
\begin{proposition}
	Let $G=([0,1], T)$ be a $t$-norm, $\mu$  a fuzzy $t$-subnorm of $G$, then $\mu(T(x,y))=0 \Rightarrow \mu(x)=0$ or $\mu(y)=0$, $\forall x,y\in [0,1]$.
\end{proposition}
\begin{proof}
	By Definition \ref{def1}, it can be seen that  $\forall x,y\in [0,1]$, $$\textnormal{min}(\mu(x),\mu(y))\leq \mu(T(x,y))=0,$$
	so $\mu(x)=0$ or $\mu(y)=0$.
\end{proof}
Different $t$-norm may have different fuzzy $t$-subnorms.
\begin{example}
	The unique fuzzy $t$-subnorm of $T_M$ is $\mu_{id}(x)=x,x\in[0,1]$, since any other $t$-norm $T$ satisfy $T(x,y) < \textnormal{min}(x,y)$ for some $ (x,y)\in[0,1]^2$, so the fuzzy $t$-subnorm cannot be $\mu_{id}$, otherwise it will be contradiction with $\mu(T(x,y))\geq \textnormal{min}(\mu(x),\mu(y))=\textnormal{min}(x,y)$.
\end{example}

\begin{example}	
	$\mu(x)=1,x\in[0,1]$ can be a fuzzy $t$-subnorm of any $t$-norm, which is easy to verify by Definition \ref{def1}.
\end{example}

To better study fuzzy $t$-norms we try to extend the relevant properties of $t$-norms to it.
\begin{definition}
	Let $\mu$ be a fuzzy $t$-subnorm of a $t$-norm $T$, for any $ x,y,z\in [0,1]$, we consider the following properties:
	\begin{enumerate}[(1)]
		\item  $\mu$ is said to be fuzzy strictly monotone if
		$$\mu(T(x,y))>\mu(T(x,z)),~\text{$0<x<1,y<z$} .$$
		\item  $\mu$ satisfies the fuzzy cancellation law if
		$$\mu(T(x,y))=\mu(T(x,z)) \Rightarrow x=0 \text{ or } y=z.$$
		\item  $\mu$ satisfies the fuzzy conditional cancellation law if
		$$\mu(T(x,y))=\mu(T(x,z))>\mu(0) \Rightarrow \mu(y)=\mu(z).$$
		\item  $\mu$ is called Archimedean if
		$$\text{for each } x,y\in(0,1), \exists n\in \mathbb{N}, s.t.\mu(x_T^{(n)})<\mu(y).$$
		\item  $\mu$ has the fuzzy limit property if
		$$\text{for all}\ x\in(0,1), \lim\limits_{n \to \infty}\mu(x_T^{(n)})=\mu(0).$$
	\end{enumerate}
\end{definition}
\begin{remark}
	\quad
	\begin{enumerate}[(1)]
		\item The fuzzy strictly monotone direction of $t$-subnorm  $\mu$ here is opposite to the strictly monotone direction of the $t$-norm. Because $\textnormal{min}(\mu(x),\mu(y))\leq \mu(T(x,y))$ for any $ x,y\in [0,1]$, if following original definition, we have contraction: $\mu(T(x,y))<\mu(T(x,1)),\mu(T(x,y))<\mu(T(1,y))$.
		\item The case of $x=1$ in fuzzy strict monotonicity is also meaningless, since $z=1, \mu(T(x,z))=1$.
	\end{enumerate}
\end{remark}

In addition, we can get the fuzzy cancellation law from the fuzzy strict monotonicity, and the fuzzy conditional cancellation law can obtained from the fuzzy cancellation law, as in the case of the $t$-norm.
\begin{proposition}\label{pro3.6}
	If $\mu$ is fuzzy strictly monotone, then $\mu$ satisfies the fuzzy cancellation law.
\end{proposition}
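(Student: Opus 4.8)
The plan is to argue by contraposition, mirroring the classical proof that strict monotonicity of a $t$-norm implies its cancellation law. Concretely, I would assume the two conditions that negate the conclusion, namely $x \neq 0$ and $y \neq z$, and aim to derive $\mu(T(x,y)) \neq \mu(T(x,z))$; this contradicts the premise of the fuzzy cancellation law and hence establishes the implication. Since the statement is symmetric in $y$ and $z$, I may assume without loss of generality that $y < z$.

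The generic case $0 < x < 1$ is immediate: the hypothesis of fuzzy strict monotonicity applies verbatim, so from $0 < x < 1$ and $y < z$ we obtain $\mu(T(x,y)) > \mu(T(x,z))$, and in particular $\mu(T(x,y)) \neq \mu(T(x,z))$. This already contradicts the assumed equality $\mu(T(x,y)) = \mu(T(x,z))$, so on this range the only possibilities compatible with that equality are $x = 0$ or $y = z$, which is exactly what we want. This step is just an unfolding of the definitions and should require no computation.

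The delicate point is the boundary value $x = 1$, which is deliberately excluded from the definition of fuzzy strict monotonicity (as the preceding remark notes, admitting it would force $\mu(T(1,y)) > \mu(1) = 1$, an impossibility). Here strict monotonicity cannot be invoked directly; instead I would use the boundary condition (T4) together with commutativity (T1) to write $T(1,y) = y$ and $T(1,z) = z$, so that the assumed equality collapses to $\mu(y) = \mu(z)$. The obstacle is that $\mu(y) = \mu(z)$ need not give $y = z$, so to settle this case one must know that fuzzy strict monotonicity already forces $\mu$ to be strictly monotone (hence injective) on the arguments themselves. I expect this to be the main difficulty: I would try to transfer the strict separation from the $T$-images back to the arguments by specializing the strict-monotonicity inequality---for instance by pushing the second argument toward $1$, where $\mu(T(x,1)) = \mu(x)$---but for a general, possibly discontinuous $t$-norm $T$ such a transfer may not be available. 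If it is not, the honest reading is that the proposition holds with $x$ restricted to $(0,1)$, precisely the range on which the hypothesis of fuzzy strict monotonicity carries content.
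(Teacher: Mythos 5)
Your treatment of the generic case coincides with the paper's own proof: for $0<x<1$ the paper argues exactly by your contraposition, noting that $y<z$ gives $\mu(T(x,y))>\mu(T(x,z))$ and $y>z$ gives the reverse inequality, so the assumed equality forces $x=0$ or $y=z$. The difference lies at the boundary: the paper's proof is phrased for all ``$x>0$'', i.e.\ it silently applies fuzzy strict monotonicity at $x=1$, which the definition (and the paper's own preceding remark) explicitly excludes. So the difficulty you isolate is not a defect of your attempt but a genuine gap in the paper's proof, which you have correctly refused to paper over.

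Your suspicion that the transfer to $x=1$ ``may not be available'' can in fact be sharpened: no such transfer exists, because the unrestricted implication is false. Take $T=T_P$ and
\[
\mu(t)=\begin{cases} 1-\tfrac{t}{2}, & 0\le t<1,\\[2pt] 1, & t=1.\end{cases}
\]
Then $\mu$ is a fuzzy $t$-subnorm of $T_P$: for $x,y\in[0,1)$ one has $\mu(xy)=1-\tfrac{xy}{2}\ge 1-\tfrac{\max(x,y)}{2}=\min\bigl(\mu(x),\mu(y)\bigr)$ since $xy\le\max(x,y)$, and the cases with an argument equal to $1$ are immediate. It is also fuzzy strictly monotone: for $0<x<1$ and $y<z$, both $xy<xz$ lie in $[0,1)$, where $\mu$ is strictly decreasing, so $\mu(T(x,y))>\mu(T(x,z))$. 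Yet $\mu(T(1,0))=\mu(0)=1=\mu(1)=\mu(T(1,1))$ with $1\neq 0$ and $0\neq 1$, so the fuzzy cancellation law as defined fails at $x=1$. Hence your ``honest reading'' is the correct one: the proposition holds only with the cancellation law restricted to $x\neq 1$ (equivalently, under the additional hypothesis that $\mu$ is injective, which is what the $x=1$ instance of the cancellation law amounts to), and the paper's proof of the unrestricted statement breaks down exactly where you predicted.
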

\begin{proof}
	Assume $x,y,z\in [0,1]$. First, if $x>0,y<z$, then $\mu(T(x,y))>\mu(T(x,z))$.	
	Similarly, if $x>0,y>z$, then $\mu(T(x,y))<\mu(T(x,z))$.	
	So,  when $\mu(T(x,y))=\mu(T(x,z))$, we have $ x=0 $ or $ y=z$.
	
\end{proof}
\begin{proposition}\label{pro3.7}
	If $\mu$ satisfies the fuzzy cancellation law, then $\mu$ satisfies the fuzzy conditional cancellation law.
\end{proposition}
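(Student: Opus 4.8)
The plan is to feed the hypothesis of the conditional law directly into the cancellation law and then eliminate the degenerate branch using the strict inequality. So I would begin by supposing $\mu(T(x,y))=\mu(T(x,z))>\mu(0)$. Since in particular $\mu(T(x,y))=\mu(T(x,z))$, the fuzzy cancellation law immediately yields the dichotomy $x=0$ or $y=z$.

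I would first dispose of the easy branch: if $y=z$, then trivially $\mu(y)=\mu(z)$, which is exactly the conclusion sought.

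The only real work is to rule out the branch $x=0$, and this is where the extra hypothesis $\mu(T(x,y))>\mu(0)$ does its job. First I would record the elementary fact that $T(0,t)=0$ for every $t\in[0,1]$: by monotonicity $T(0,t)\le T(0,1)$, and the boundary condition (T4) together with commutativity (T1) gives $T(0,1)=T(1,0)=0$, so $T(0,t)=0$. Consequently, if $x=0$ then $T(x,y)=T(0,y)=0$, whence $\mu(T(x,y))=\mu(0)$. This contradicts the assumption $\mu(T(x,y))>\mu(0)$, so the branch $x=0$ cannot occur under the stated hypothesis.

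Combining the two branches, the only possibility left is $y=z$, which gives $\mu(y)=\mu(z)$ and completes the argument. There is no genuine obstacle here; the single point requiring care is to notice that the strictness in $>\mu(0)$ is precisely what forbids the $x=0$ alternative supplied by the cancellation law, so that the conclusion cannot degenerate to the trivial case.
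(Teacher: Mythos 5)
Your proposal is correct and follows essentially the same route as the paper's own proof: apply the fuzzy cancellation law to get the dichotomy $x=0$ or $y=z$, then use the strict inequality $\mu(T(x,y))>\mu(0)$ to rule out the $x=0$ branch by contradiction, since $x=0$ forces $T(x,y)=0$ and hence $\mu(T(x,y))=\mu(0)$. Your write-up is in fact slightly more careful than the paper's, since you explicitly justify $T(0,t)=0$ from monotonicity, commutativity, and the boundary condition, a step the paper leaves implicit.
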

\begin{proof}
	Assume $x,y,z\in [0,1]$. If $\mu(T(x,y))=\mu(T(x,z))>\mu(0)$, we have $ x=0 \text{ or } y=z$ by the fuzzy cancellation law.
	If $x=0$, then $\mu(T(x,y))=\mu(T(x,z))=\mu(0)$ which is contradictory.
	
	In summary, $\mu(T(x,y))=\mu(T(x,z))>\mu(0) \Rightarrow y=z.$
\end{proof}

In addition, the strict monotonicity of $T$ has restrictions on the fuzzy $t$-subnorm $\mu$.
\begin{proposition}
	
	If the $t$-norm $T$ is strictly monotone and $\mu$ is a fuzzy $t$-subnorm of the $t$-norm $T$, then $\mu(x)$ cannot be strictly decreasing on $[0,1]$.
\end{proposition}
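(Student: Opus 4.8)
The plan is to argue by contradiction and to locate the real obstruction in the boundary condition of a fuzzy $t$-subnorm rather than in the strict monotonicity of $T$. First I would suppose, toward a contradiction, that $\mu$ is strictly decreasing on the whole closed interval $[0,1]$. Since $\mu$ is by assumption a fuzzy subset of $G$, its range lies in $[0,1]$, so in particular $\mu(0)\le 1$. On the other hand, applying strict decrease to the comparable pair $0<1$ gives $\mu(0)>\mu(1)$.

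The key step is then to invoke the boundary condition from Definition \ref{def1}(2), namely $\mu(1)=1$. Substituting this into the inequality $\mu(0)>\mu(1)$ yields $\mu(0)>1$, which directly contradicts $\mu(0)\le 1$. Hence no fuzzy $t$-subnorm can be strictly decreasing on $[0,1]$, and in particular the given $\mu$ cannot be; this completes the argument.

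I expect the point worth flagging to be conceptual rather than computational: the contradiction is produced entirely by the boundary value $\mu(1)=1$ together with the codomain $[0,1]$, so the hypothesis that $T$ is strictly monotone is not actually consumed. Indeed, one can check that condition (1) of Definition \ref{def1} is perfectly consistent with a decreasing $\mu$. For $x,y\in(0,1)$ the strict monotonicity of $T$ only gives $T(x,y)<\min(x,y)$, whence under strict decrease $\mu(T(x,y))>\min(\mu(x),\mu(y))$, so the subnorm inequality $\min(\mu(x),\mu(y))\le\mu(T(x,y))$ holds (even strictly) and never fails. Thus any attempt to extract the contradiction from the interaction of $T$ with condition (1) alone is doomed, and the boundary condition is indispensable.

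The only place I would take care is the requirement that the decrease be strict on the \emph{closed} interval, i.e.\ including the endpoint $x=1$. If one weakened the hypothesis to strict decrease merely on the open interval $(0,1)$, the short argument above would break, and one would then have to engage the fine structure of $T$ (for instance through the iterated powers $x_T^{(n)}$ together with the subnorm inequality) to try to recover a contradiction. That would be the genuinely hard direction; the present statement sidesteps it precisely by asking for strict decrease on all of $[0,1]$, where the boundary value settles the matter at once.
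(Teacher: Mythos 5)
Your argument is correct, and it takes a genuinely different route from the paper's own proof --- in fact a sounder one. The paper argues from the strict monotonicity of $T$: for $x,y\in(0,1)$ it derives $T(x,y)<\min(x,y)$, concludes that a strictly decreasing $\mu$ would give $\mu(T(x,y))>\max(\mu(x),\mu(y))$, and asserts that this ``contradicts the definition.'' As your third paragraph correctly diagnoses, that is a non sequitur: condition (1) of Definition~\ref{def1} only bounds $\mu(T(x,y))$ \emph{from below}, so $\mu(T(x,y))>\max(\mu(x),\mu(y))\geq\min(\mu(x),\mu(y))$ is perfectly consistent with it. The only clause a strictly decreasing $\mu$ actually violates is the boundary condition $\mu(1)=1$, which is precisely what your two-line contradiction uses ($\mu(0)>\mu(1)=1$ against $\mu(0)\leq 1$). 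What your approach buys is twofold: it is shorter, and it shows the hypothesis that $T$ is strictly monotone is superfluous --- the conclusion holds for every $t$-norm --- whereas the paper's route, which tries to consume that hypothesis through condition (1), cannot be closed as written. One addendum to your final caveat: the weakened statement you flag (strict decrease only on $(0,1)$, or even on $[0,1)$) is not merely harder, it is \emph{false}, so no amount of work with the iterated powers $x_T^{(n)}$ would recover it. Take $T=T_P$ and define $\mu(x)=1-x$ for $x\in[0,1)$ with $\mu(1)=1$: for $x,y\in[0,1)$ one has $\min(\mu(x),\mu(y))=1-\max(x,y)\leq 1-xy=\mu(T_P(x,y))$ since $xy\leq\max(x,y)$, and the case $x=1$ or $y=1$ gives equality, so this $\mu$ is a fuzzy $t$-subnorm of the strictly monotone $T_P$ that is strictly decreasing on $[0,1)$. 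The endpoint $x=1$ thus carries the entire content of the proposition, exactly as your proof makes explicit.
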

\begin{proof}
	The $t$-norm $T$ is strictly monotone, then for any $x,y\in (0,1)$,
	$$T(x,y)<T(x,1)=x,T(x,y)<y.$$
	If  $\mu(x)$ is strictly decreasing on $[0,1]$, then for all $x,y\in (0,1), \mu(T(x,y))>\text{max}(\mu(x),\mu(y))$, which contradicts the definition.
\end{proof}

However, ``$\mu(x)$ is not strictly decreasing on $[0,1]$" is not equal to ``the fuzzy $t$-subnorm $\mu$ is not fuzzy strictly  monotone". Furthermore, we can see that only when $t$-norm $T$ is strictly monotone, the fuzzy $t$-subnorm $\mu$ is likely to be fuzzy strictly  monotone.

\begin{proposition}
	If the $t$-norm $T$ is not strictly monotone, then the fuzzy $t$-subnorm is also not fuzzy strictly  monotone.
\end{proposition}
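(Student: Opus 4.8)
\section*{Proof proposal}

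The plan is to prove the statement by producing an explicit witness at which fuzzy strict monotonicity must fail. First I would unpack what ``$T$ is not strictly monotone'' means. By the monotonicity axiom (T3), $T$ is non-decreasing in each argument, so for $y < z$ we always have $T(x,y) \le T(x,z)$. Consequently the failure of strict monotonicity cannot manifest as a reversed inequality; it can only be an equality. That is, there exist $x_0 > 0$ and $y_0 < z_0$ with $T(x_0, y_0) = T(x_0, z_0)$.

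Next I would argue that such a witnessing $x_0$ must in fact satisfy $0 < x_0 < 1$, so that it lies in the range required by the definition of fuzzy strict monotonicity. The endpoint $x_0 = 1$ is excluded by the boundary condition (T4): since $T(1, y_0) = y_0$ and $T(1, z_0) = z_0$ with $y_0 < z_0$, we get $T(1, y_0) \ne T(1, z_0)$, so no failure of strict monotonicity can occur at $x_0 = 1$. Hence any witnessing triple automatically has $0 < x_0 < 1$.

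Finally, applying the membership map $\mu$ to the equality $T(x_0, y_0) = T(x_0, z_0)$ yields $\mu(T(x_0, y_0)) = \mu(T(x_0, z_0))$. This directly contradicts the strict inequality $\mu(T(x_0, y_0)) > \mu(T(x_0, z_0))$ demanded by fuzzy strict monotonicity for the admissible triple $(x_0, y_0, z_0)$ with $0 < x_0 < 1$ and $y_0 < z_0$. Therefore $\mu$ is not fuzzy strictly monotone.

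The argument is essentially immediate once the definitions are aligned, so I do not expect a substantial obstacle. The only point requiring genuine care is the endpoint check in the second step: one must verify that the failure of strict monotonicity of $T$ occurs at some $x_0$ with $0 < x_0 < 1$, and not merely at $x_0 > 0$, since it is precisely the restriction $0 < x < 1$ in the fuzzy definition that the witness must respect in order to yield a legitimate counterinstance.
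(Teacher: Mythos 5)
Your proposal is correct and follows essentially the same route as the paper: extract a witness $T(x_0,y_0)=T(x_0,z_0)$ with $x_0>0$, $y_0<z_0$, apply $\mu$, and contradict fuzzy strict monotonicity. In fact you are slightly more careful than the paper's own proof, which takes a witness merely with $x>0$ and never checks the constraint $x<1$ required by the fuzzy definition; your observation that the boundary condition $T(1,y)=y$ rules out $x_0=1$ closes that small gap.
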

\begin{proof}
	If the $t$-norm $T$ is not strictly monotone, then there exists $ x>0$, $0\leq y<z\leq 1$, s.t. $T(x,y)=T(x,z)$. This means $\mu(T(x,y))=\mu(T(x,z))$.	Thus $\mu$ is not fuzzy strictly monotone.
\end{proof}

\section{Vague $t$-norm with some properties}\label{SEC4}

In this section, we first introduce some related concepts, and then generalize several properties of $t$-norm.
\subsection{Vague monoid}
The conjunctive operation $\wedge$ always stands for the minimum operation between two real numbers.
\begin{definition}\cite{VagueGroups}
	A mapping $E_X: X\times X \rightarrow [0,1]$ is called a fuzzy equality on $X$ if and only if it satisfy the following conditions:
	\begin{enumerate}[(1)]
		\item $E_X(x,y)=1 \Leftrightarrow x=y, \forall x,y\in X$.
		\item $E_X(x,y)=E_X(y,x), \forall x,y\in X$.
		\item $E_X(x,y)\wedge E_X(y,z)\leq E_X(x,z) , \forall x,y,z\in X$.
	\end{enumerate}
\end{definition}
For two non-mpty crisp sets $X$ and $Y$, let $E_X,E_Y$ be two fuzzy
equalities on $X$ and $Y$, respectively. Then a fuzzy relation $f$ on $X\times Y$ (a subset of $X\times Y$) is called a fuzzy function from $X$ to $Y$ w.r.t. $E_X$ and $E_Y$, denoted by the usual notation $f:X\rightarrow Y$, if and only if the characteristic function $\mu:X\times Y\rightarrow [0,1]$ of $f$ holds the following two conditions:
\begin{enumerate}[(1)]
	\item $\forall x\in X,\exists y\in Y,\mu(x,y)>0.$
	\item $\forall x,y \in X, \forall z,w\in Y,\mu(x,z)\wedge \mu(y,w)\wedge E_X(x,y)\leq E_Y(z,w).$
\end{enumerate}
A fuzzy function $f$ is called a strong fuzzy function if and only if it additionally
satisfies: $\forall x\in X,\exists y\in Y,\mu(x,y)=1$.

\begin{definition}\cite{Vagueandfuzzyt-normsandt-conorms}
	A strong fuzzy function $f:X\times X\rightarrow X$ w.r.t. a
	fuzzy equality $E_{X\times X}$ on $X\times X$ and a fuzzy equality $E_X$ on $X$ is said to be
	a vague binary operation on $X$ w.r.t. $E_{X\times X}$ and $E_X$.
	
\end{definition}
With vague binary operations, we can define vague semigroups and vague monoids.
\begin{definition}\cite{VagueGroups}
	$\circ$ is the vague binary operation on the fuzzy equivalence relation $E_{X\times X}$ and $E_X$ on the set $X$, $\mu:X\times X\times X\rightarrow [0,1]$ is its membership function. Then
	\begin{enumerate}[(1)]
		\item
		$X$ together with $\circ$, denoted by $(X,\circ)$, is called a vague semigroup if and only if the characteristic function $\mu$ fulfills the condition:
		\begin{align*}
			\mu(b,c,d)\wedge\mu(a,d,m)\wedge\mu(a,b,q)\wedge\mu(q,c,w)\leq E_X(m,w),
		\end{align*}
		where $\forall a,b,c,d,m,q,w\in X.$
		\item
		A vague semigroup $(X,\circ)$ is a vague monoid if and only if
		there exists an identity element $e\in X$ such that for every $a\in X$, $\mu(e,a,a)\wedge \mu(a,e,a)=1.$
		\item
		A vague monoid $(X,\circ)$ is a vague group if and only if for every $a\in X$, there exists an inverse  element $a^{-1}\in X$ such that$$\mu(a^{-1},a,e)\wedge \mu(a,a^{-1},e)=1.$$
		\item
		A vague semigroup $(X,\circ)$ is said to be abelian (commutative) if and only if $\circ$ satisfies the condition	$$\mu(a,b,m)\wedge \mu(b,a,w)\leq E_X(m,w),\forall a,b,m,w\in X.$$
	\end{enumerate}
\end{definition}
We know that in a group $G$, the left and right cancellation law holds, that is, $xy=xz\Rightarrow y=z,yx=zx\Rightarrow y=z,\forall x,y,z\in G$, this is because every element in the group has an inverse element, just multiply both sides of the equation by $x^{-1}$ to the left or right at the same time. We also have a generalized cancellation law in the vague group.

\begin{proposition}\cite{VagueGroups}
	Let $\circ$ be the vague binary operation on the fuzzy equality $E_{X\times X}$ and $E_X$ on the set $X$, $\mu:X\times X\times X\rightarrow [0,1]$ is its membership function. $(X,\circ)$ is vague group, then
	\begin{enumerate}
		\item[(1)]	$\mu(a,b,u)\wedge \mu(a,c,u)\leq E_X(b,c),\forall a,b,c,u\in X.$
		\item[(2)]  $\mu(b,a,u)\wedge \mu(c,a,u)\leq E_X(b,c),\forall a,b,c,u\in X.$
	\end{enumerate}
\end{proposition}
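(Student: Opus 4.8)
The plan is to imitate the classical derivation of cancellation from the existence of inverses, translating each step into the three-argument membership language. For part (1) the classical identity is $a^{-1}\circ(a\circ b)=(a^{-1}\circ a)\circ b=e\circ b=b$; since $a\circ b=u=a\circ c$, the common element $a^{-1}\circ u$ is forced to coincide with both $b$ and $c$, whence $b=c$. So the first step is to produce this auxiliary element in the vague setting: because $\circ$ is a strong fuzzy function, there exists some $v\in X$ with $\mu(a^{-1},u,v)=1$, and this single $v$ serves for both $b$ and $c$ precisely because the output $u$ is shared.

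Next I would feed the vague-semigroup associativity axiom $\mu(b',c',d')\wedge\mu(a',d',m')\wedge\mu(a',b',q')\wedge\mu(q',c',w')\le E_X(m',w')$ with the substitution $(a',b',c')=(a^{-1},a,b)$ and intermediate values $d'=u$, $q'=e$, $m'=v$, $w'=b$. The four factors then read $\mu(a,b,u)$, $\mu(a^{-1},u,v)$, $\mu(a^{-1},a,e)$, $\mu(e,b,b)$. The last three equal $1$ by the strong-function choice of $v$, by the inverse axiom $\mu(a^{-1},a,e)\wedge\mu(a,a^{-1},e)=1$, and by the identity axiom $\mu(e,b,b)\wedge\mu(b,e,b)=1$ respectively, so the inequality collapses to $\mu(a,b,u)\le E_X(v,b)$. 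Repeating verbatim with $c$ in place of $b$ (same $v$) gives $\mu(a,c,u)\le E_X(v,c)$.

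Finally I would combine these using the fuzzy-equality axioms: symmetry turns $E_X(v,b)$ into $E_X(b,v)$, and transitivity gives $E_X(b,v)\wedge E_X(v,c)\le E_X(b,c)$. Chaining the two inequalities then yields $\mu(a,b,u)\wedge\mu(a,c,u)\le E_X(b,v)\wedge E_X(v,c)\le E_X(b,c)$, which is (1). Part (2) is entirely symmetric: one multiplies on the right, picks $v'$ with $\mu(u,a^{-1},v')=1$, and applies associativity with $(a',b',c')=(b,a,a^{-1})$, $d'=e$, $q'=u$ to obtain $\mu(b,a,u)\le E_X(b,v')$ and $\mu(c,a,u)\le E_X(c,v')$, then closes again with symmetry and transitivity of $E_X$.

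The hard part will be purely the bookkeeping: correctly matching the positional slots of the associativity axiom to the intended products $a^{-1}\circ(a\circ b)$ and $(a^{-1}\circ a)\circ b$, and verifying that exactly three of its four factors can be driven to $1$. The key point to watch is that the shared output $u$ must force a \emph{single} auxiliary element $v$ (rather than two different intermediates $v_b,v_c$), since otherwise the two inequalities would not share a middle term and the final application of transitivity would fail.
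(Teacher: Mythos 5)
Your proposal is correct and matches the paper's own proof essentially step for step: the same auxiliary $v$ with $\mu(a^{-1},u,v)=1$ from the strong-fuzzy-function property, the same instantiation of the associativity axiom yielding $\mu(a,b,u)\le E_X(v,b)$ and $\mu(a,c,u)\le E_X(v,c)$, and the same closing use of symmetry and transitivity of $E_X$. Your part (2) is simply the explicit version of what the paper dismisses with ``by symmetry,'' and your positional bookkeeping checks out.
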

\begin{proof}
	\begin{enumerate}
		\item[(1)]	Since $\circ$ is a strong fuzzy function, $\forall a,b,c,u\in X$, exist $v\in X$ such that  $\mu(a^{-1},u,v)=1$.
		From  the definition of associativity in vague group, we have that
		
		$
		\mu(a,b,u)=\mu(a,b,u)\wedge\mu(a^{-1},u,v)\wedge \mu(a^{-1},a,e)\wedge \mu(e,b,b)
		\leq E_X(v,b) \text{ and }
		\mu(a,c,u)=\mu(a,c,u)\wedge\mu(a^{-1},u,v)\wedge \mu(a^{-1},a,e)\wedge \mu(e,c,c)
		\leq E_X(v,c).
		$
		
		Thus, $\mu(a,b,u)\wedge \mu(a,c,u)\leq E_X(b,v)\wedge E_X(v,c)\leq E_X(b,c)$.
		\item[(2)]  It can be proved in a similar way due to symmetry.
	\end{enumerate}
\end{proof}
\subsection{Vague $t$-norm with some extended properties}
If we slightly change the conditions in the definition of fuzzy equality, we get a fuzzy equality respect to $t$-norm.
\begin{definition}
	Let $T$ be a $t$-norm, $X$ is a set. A mapping $E_X:X\times X\rightarrow[0,1]$ will be called $T$-fuzzy equality if and only if the following conditions are satisfied:
	\begin{enumerate}[(1)]
		\item $E_X(x, x) = 1 ,\forall x\in X$
		\item $ E_X(x, y) = E_X(y, x),\forall x, y \in X$
		\item $ T(E_X(x, y),E_X(y, z)) \leq E_X(x, z),\forall x, y, z \in X.$
	\end{enumerate}
\end{definition}
If $\forall x,y\in X, E_X(x, y) = 1 \Rightarrow x = y$, then it is said that $T$-fuzzy equality $E_X$  separates points.

In order to obtain the vague $t$-norm, we give the definition of the $T$-vague binary operation similar to the vague binary operation.

\begin{definition}\cite{Vagueandfuzzyt-normsandt-conorms}
	Let $T$ be a $t$-norm, $E$ a $T$-fuzzy equality on $X$, then the a mapping $\tilde{\circ}:M\times M\times M\rightarrow [0,1]$ is a $T$-vague binary operation, if for all $ x,y,z,x',y',z'\in M$
	\begin{enumerate}[(1)]
		\item $T(\tilde{\circ}(x,y,z),E(x,x'),E(y,y'),E(z,z'))\leq \tilde{\circ}(x',y',z').$
		\item $T(\tilde{\circ}(x,y,z),\tilde{\circ}(x,y,z'))\leq E(z,z').$
		\item $\forall x,y\in M,\exists z\in M,\tilde{\circ}(x,y,z)=1.$
	\end{enumerate}
\end{definition}

\begin{definition}\cite{Vagueandfuzzyt-normsandt-conorms}
	Let $T$ be a $t$-norm, $E$ a $T$-fuzzy equality on a monoid $M$, $\tilde{\circ}$ is the $T$-vague binary operation on $M$ w.r.t. $E$, then $(M,\tilde{\circ})$ is a $T$-vague monoid if and only if it satisfies
	\begin{enumerate}[(1)]
		\item
		For any $ x,y,z,d,m,q,w\in M$,$$ T(\tilde{\circ}(y,z,d),\tilde{\circ}(x,d,m),\tilde{\circ}(x,y,q),\tilde{\circ}(q,z,w))\leq E(m,w).$$
		\item There exists an element $e\in M$ s.t. for each $a\in M$, $$T(\tilde{\circ}(e,a,a), \tilde{\circ}(a,e,a))=1.$$
	\end{enumerate}
\end{definition}
\begin{definition}\cite{Vagueandfuzzyt-normsandt-conorms}
	Let $T$ be a $t$-norm, $E$ a $T$-fuzzy equality on $[0,1]$, $\tilde{T}$ a $T$-vague binary operation on $[0,1]$  w.r.t. $E$. Then a $T$-vague monoid $([0,1],\tilde{T})$ is called a $T$-vague $t$-norm  if and only if
	$$E(T(x,y),z)=\tilde{T}(x,y,z).$$
\end{definition}
From the above definition, we show that the $T$-vague $t$-norm is commutative.
\begin{proposition}\cite{Vagueandfuzzyt-normsandt-conorms}
	Let $T$ be a $t$-norm, $E$ a $T$-fuzzy equality on $[0,1]$, $\tilde{T}$ a  $T$-vague binary operation on $[0,1]$  w.r.t. $E$. $([0,1],\tilde{T})$ is $T$-vague $t$-norm, then  it must be commutative, i.e., $T(\tilde{T}(a,b,m),\tilde{T}(b,a,w))\leq E(m,w). $
\end{proposition}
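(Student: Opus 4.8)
The plan is to reduce the claim to a single application of the transitivity axiom of the $T$-fuzzy equality, using the defining identity of a $T$-vague $t$-norm together with the commutativity of the underlying $t$-norm $T$. The whole argument is a short chain of rewrites; there is essentially no combinatorial difficulty, so the work is in arranging the substitutions so that the middle argument of the fuzzy equality becomes common.

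First I would unwind the definition of the $T$-vague $t$-norm, namely $\tilde{T}(x,y,z)=E(T(x,y),z)$, to rewrite both factors on the left-hand side. This gives
\begin{equation*}
T(\tilde{T}(a,b,m),\tilde{T}(b,a,w))=T\bigl(E(T(a,b),m),\,E(T(b,a),w)\bigr).
\end{equation*}
Next I would invoke the commutativity axiom (T1) of $T$, which yields $T(a,b)=T(b,a)$; writing $c:=T(a,b)=T(b,a)$ for this common value, the right-hand side collapses to $T\bigl(E(c,m),E(c,w)\bigr)$. This is the key step: commutativity of the crisp $t$-norm is exactly what forces the two fuzzy-equality expressions to share the argument $c$, which is what makes transitivity applicable.

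Finally I would apply symmetry of the $T$-fuzzy equality, $E(c,m)=E(m,c)$, to bring the expression into the shape
\begin{equation*}
T\bigl(E(m,c),E(c,w)\bigr)\leq E(m,w),
\end{equation*}
which is precisely the transitivity condition (3) in the definition of a $T$-fuzzy equality with the choice $x=m$, $y=c$, $z=w$. Chaining these rewrites establishes $T(\tilde{T}(a,b,m),\tilde{T}(b,a,w))\leq E(m,w)$ for all $a,b,m,w\in[0,1]$, as required. I do not anticipate any genuine obstacle here; the only thing to watch is that the $T$-vague $t$-norm defining identity and the commutativity of $T$ are used \emph{before} transitivity, since transitivity only helps once the two equalities have been made to meet at the common point $c$.
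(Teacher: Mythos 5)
Your proposal is correct and follows essentially the same route as the paper's proof: rewrite both factors via $\tilde{T}(x,y,z)=E(T(x,y),z)$, use commutativity of $T$ to make the first arguments coincide, then conclude by transitivity of $E$. Your explicit invocation of the symmetry of $E$ before applying transitivity is a detail the paper leaves implicit, but the argument is identical in substance.
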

\begin{proof}
	From the definition of the $T$-vague equality and  the $T$-vague  $t$-norm, for all $a,b,m,w\in [0,1]$, we can conclude that
	\begin{equation*}
		\begin{aligned}
			T(\tilde{T}(a,b,m),\tilde{T}(b,a,w))&=T(E(T(a,b),m),E(T(b,a),w))\\
			&=T(E(T(a,b),m),E(T(a,b),w))\\
			&\leq E(m,w).
		\end{aligned}
	\end{equation*}
\end{proof}
Since the $T$-vague $t$-norm is commutative, we can think that the first two positions of the operation $\tilde{T}$ have the same status. Therefore, when we give the definitions of strict monotonicity and cancellation law, we only give the first one situation of the location.
\begin{definition}
	Let $T$ be a $t$-norm, $E$  a $T$-fuzzy equality on $[0,1]$,  $\tilde{T}$ a $T$-vague binary operation on $[0,1]$  w.r.t. $E$ and $([0,1],\tilde{T})$ a $T$-vague $t$-norm.
	\begin{enumerate}[(1)]
		\item $\tilde{T}$ is said to be vague strictly monotone if
		$$x<y,\tilde{T}(x,z,a)=\tilde{T}(y,z,b)\Rightarrow a<b,$$
		where $x, y, z, a, b \in [0,1]$.
		
		\item $\tilde{T}$ satisfies the vague cancellation law if
		$$\forall x,a,b,c \in[0,1],\tilde{T}(a,x,c)=\tilde{T}(b,x,c)\Rightarrow a=b.$$
	\end{enumerate}
\end{definition}

Similarly, we can get the vague  cancellation law from the vague strictly monotonicity as in the case of the $t$-norm.
\begin{proposition}\label{pro12}
	Let $T$ be a $t$-norm, $E$ a $T$-fuzzy equality on $[0,1]$,  $\tilde{T}$ a the $T$-vague binary operation on $[0,1]$  w.r.t. $E$ and $([0,1],\tilde{T})$ a $T$-vague $t$-norm. If $\tilde{T}$ is vague strictly monotone, then it must satisfy the vague cancellation law.
\end{proposition}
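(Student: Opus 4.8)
The plan is to argue contrapositively, exactly paralleling the classical derivation of the cancellation law from strict monotonicity and its fuzzy analogue in Proposition~\ref{pro3.6}. I would assume that the vague cancellation law fails and extract a contradiction directly from the hypothesis that $\tilde{T}$ is vague strictly monotone, with no further machinery needed.

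Concretely, I would suppose there exist $x,a,b,c\in[0,1]$ with $\tilde{T}(a,x,c)=\tilde{T}(b,x,c)$ but $a\neq b$. Since this hypothesis is symmetric in $a$ and $b$, I may assume without loss of generality that $a<b$ (one could alternatively invoke the commutativity established above, but it is not required here). I would then feed the pair $a<b$ into the vague strict monotone property, taking $x$ as the common middle argument and letting both third arguments equal $c$: matching $\tilde{T}(a,x,c)=\tilde{T}(b,x,c)$ against the template $\tilde{T}(x,z,a')=\tilde{T}(y,z,b')$ that defines vague strict monotonicity (with $a'=b'=c$) forces the strict inequality $c<c$, which is absurd.

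This contradiction shows that no such pair $a\neq b$ can occur, so $\tilde{T}(a,x,c)=\tilde{T}(b,x,c)$ implies $a=b$, which is precisely the vague cancellation law. I do not anticipate a genuine obstacle: the single point that needs attention is the correct alignment of the two definitions' variables — vague strict monotonicity varies the first slot while holding the second fixed, exactly as the cancellation law does — and the decisive observation is that both third slots carry the identical value $c$, so applying strict monotonicity is forced to compare $c$ with itself.
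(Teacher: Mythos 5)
Your proposal is correct and matches the paper's own argument: the paper likewise assumes $\tilde{T}(a,x,c)=\tilde{T}(b,x,c)$ with $a\neq b$ and applies vague strict monotonicity with both third arguments equal to $c$ to obtain the absurdity $c<c$, treating the cases $a<b$ and $a>b$ explicitly where you compress them into a single WLOG step justified by the symmetry of the hypothesis. No gap; the variable alignment you flag as the one delicate point is handled correctly.
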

\begin{proof}
	If $\tilde{T}$ is vague strictly monotone,  then
	$$x<y\text{ and } \tilde{T}(x,z,a)=\tilde{T}(y,z,b)\Rightarrow a<b,$$
	where $x, y, z, a, b\in[0,1]$.
	Assume that $\tilde{T}(a,x,c)=\tilde{T}(b,x,c)\text{ and }a\neq b$,	if $a>b$, we have $c<c$, which conflicts with the definition of vague strictly monotone. And if $a<b$, we also have $c<c$. Therefore,
	$$\tilde{T}(a,x,c)=\tilde{T}(b,x,c)\Rightarrow a=b,\forall x,a,b,c \in [0,1],$$
\end{proof}

\section{T-norm on bounded lattice with some properties}\label{SEC5}
Finally, we generalize some conclusions obtained to a special partially ordered set, which called bounded lattice.

\subsection{Bounded lattice}
In this section, we recall some basic notions and results related to lattices and $t$-norms on a bounded lattice\cite{boundedlattice}.

A bounded lattice is a lattice $(L, \leq)$ which has the top element $1$ and the bottom element $0$, that is, two elements $1, 0\in L$ exist such that $0 \leq x \leq 1$ for all $x \in L$.

\begin{definition}
	Let $(L, \leq, 0, 1)$ be a bounded lattice and
	$a, b\in L$ with $a \leq b$. The subinterval $[a, b]$ is defined by
	$$ [a,b]=\{x\in L:a\leq x\leq b\}.$$
\end{definition}
Other subintervals such as $[a, b)$, $(a, b]$ and $(a, b)$ can be
defined similarly. Obviously, $([a, b], \leq)$ is a bounded lattice
with the top element $b$ and the bottom element $a$.
Let $(L, \leq, 0, 1)$ be a bounded lattice, $[a, b]$ be a subinterval
of $L$, $T_1$ and $T_2$ be two binary operations on $[a, b]^2$. If there
holds $T_1(x, y) \leq T_2(x, y)$ for all $(x, y) \in [a, b]^2$, then we say
that $T_1$ is less than or equal to $T_2$ or, equivalently, that $T_2$ is
greater than or equal to $T_1$, and written as $T_1 \leq T_2$.
\begin{definition}\cite{boundedlattice}
	Let $(L,\leq,0,1)$ be a bounded lattice and $[a,b]$ be a subinterval of $L$. A binary operation $T:[a,b]\times [a,b]\rightarrow [a,b]$ is said to be a $t$-norm on $[a,b]$ if for any $x,y,z\in [a,b]$, the following conditions are fulfilled:
	\begin{enumerate}[(1)]
		\item If $y\leq z$, then $T(x,y)\leq T(x,z)$,
		\item $T (x, T (y, z)) = T (T (x, y), z)$,
		\item $T(x,y)=T(y,x)$,
		\item $T(x,b)=x$.
	\end{enumerate}
\end{definition}
For the sake of brevity and without loss of generality, we set $a=0$ and $b=1$ in the above definition.
In order to be able to generalize the conclusions about fuzzy $t$-norm and vague $t$-norm to bounded lattices, we need to give the concept of fuzzy sets on lattices.
\begin{definition}\cite{tnorm}
	Let $L$ be a lattice and $X$ be a non-empty set. We call the mapping $A:X\rightarrow L$ a $L$-subset of $X$, and call $A(x)$($x\in X$) the degree of membership of $x$ to $A$, which is interpreted as the degree to which the object $x$ belongs to the $L$-subset $A$.
\end{definition}

\begin{definition}\cite{boundedlattice}
	Let $T$ be a $t$-norm on bounded lattice $G=(L,\leq,0,1)$, $\mu$ a $L$-subset of $[0,1]$. Then $\mu$ is the fuzzy $t$-subnorm of $G$ if and only if
	\begin{enumerate}[(1)]
		\item 	 $\mu(x)\wedge\mu(y)\leq \mu(T(x,y)), \forall x, y \in L. $
		\item	$\mu(1) = 1$.
	\end{enumerate}	
\end{definition}

\subsection{Generalization of the previous conclusions}

We have extended the strict monotonicity and cancellation laws of $t$-norm to the fuzzy and vague cases, and we can now do this work similarly for $t$-norm on bounded lattices.
\begin{definition}
	For a fuzzy $t$-subnorm $\mu$ of a bounded lattice $L$ with respect to a $t$-norm $T$, we consider the following properties:
	\begin{enumerate}[(1)]
		\item  $\mu$ is said to be fuzzy strictly monotone if
		$$ x\in L\backslash \{ 0,1 \}, y<z, \text{ then } \mu(T(x,y))>\mu(T(x,z)).$$
		\item  $\mu$ satisfies the fuzzy cancellation law if
		$$\mu(T(x,y))=\mu(T(x,z)) \Rightarrow x=0 \text{ or } y=z.$$
		\item  $\mu$ satisfies the fuzzy conditional cancellation law if		$$\mu(T(x,y))=\mu(T(x,z))>\mu(0) \Rightarrow \mu(y)=\mu(z).$$
		\item  $\mu$ is called fuzzy Archimedean if
		$$\forall x,y\in L\backslash \{ 0,1 \}, \text{ there exists }n\in \mathbb{N}, s.t.\ \mu(x_T^{(n)})<\mu(y).$$
		\item  $\mu$ has the fuzzy limit property if
		$$\forall x\in L\backslash \{ 0,1\}\text{, there exists} \lim\limits_{n \to \infty} \mu(x_T^{(n)})=\mu(0).$$
	\end{enumerate}
\end{definition}
We can also get the fuzzy cancellation law from the fuzzy strict monotonicity, and from the fuzzy cancellation law  deduce the  fuzzy conditional cancellation law, as in the case of the $t$-norms.
\begin{proposition}\label{13}
	If $\mu$ is fuzzy strictly monotone, then $\mu$ satisfies the fuzzy cancellation law.
\end{proposition}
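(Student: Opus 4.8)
The plan is to mirror the proof of Proposition \ref{pro3.6} from the unit-interval setting: assume $\mu(T(x,y)) = \mu(T(x,z))$ with $x \neq 0$, and deduce $y = z$ by analysing the order relation between $y$ and $z$. The driving tool is fuzzy strict monotonicity, which converts any strict comparison of $y$ and $z$ into a strict comparison of the values $\mu(T(x,y))$ and $\mu(T(x,z))$ whenever $x \in L \setminus \{0,1\}$.

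First I would dispatch the two comparable cases. If $y < z$, fuzzy strict monotonicity yields $\mu(T(x,y)) > \mu(T(x,z))$, contradicting the assumed equality; by symmetry, $z < y$ gives $\mu(T(x,z)) > \mu(T(x,y))$, again impossible. Hence $y$ and $z$ cannot be strictly comparable, so either $y = z$, which is the desired conclusion, or $y \parallel z$.

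The incomparable case $y \parallel z$ is where the real difficulty lies, and it is genuinely new: in the chain $[0,1]$ this possibility never arises, so the original argument never had to confront it. Fuzzy strict monotonicity only compares values of the map $y \mapsto \mu(T(x,y))$ along chains and says nothing directly for incomparable arguments. The natural move is to pass to $y \wedge z$ and $y \vee z$: since $y \wedge z < y$ and $y \wedge z < z$, strict monotonicity gives $\mu(T(x,y\wedge z)) > \mu(T(x,y))$ and $\mu(T(x,y\wedge z)) > \mu(T(x,z))$, with the dual inequalities for the join. Unfortunately both pairs are fully consistent with $\mu(T(x,y)) = \mu(T(x,z))$ and produce no contradiction. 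I therefore expect this step to be the crux: either the statement tacitly presumes $y$ and $z$ comparable, which is automatic in $[0,1]$ but not in a general lattice $L$, or the definition of fuzzy strict monotonicity must be strengthened to control incomparable arguments. Without one of these, strict monotonicity alone does not force $y = z$.

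A secondary edge case also deserves mention: fuzzy strict monotonicity is imposed only for $x \in L \setminus \{0,1\}$, whereas the cancellation law excludes only $x = 0$. At $x = 1$ the boundary condition $T(x,1)=x$ together with commutativity gives $T(1,y)=y$ and $T(1,z)=z$, so the hypothesis collapses to $\mu(y) = \mu(z)$, which does not imply $y = z$ unless $\mu$ is injective. Treating $x = 1$ as degenerate, as the unit-interval remark does for strict monotonicity, and then invoking the comparable-case argument above closes the proof along the same lines as Proposition \ref{pro3.6}.
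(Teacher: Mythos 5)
Your trichotomy argument for comparable $y,z$ is exactly the paper's proof: the paper disposes of Proposition \ref{13} with the single line ``The proof is similar to Proposition \ref{pro3.6}'', i.e., precisely the case split $y<z$ versus $z<y$ that you reproduce. So on the cases the paper actually covers, you match it.

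The more valuable part of your proposal is that both defects you isolate are genuine, and they are defects of the paper's argument, not of yours. First, the incomparable case: in a bounded lattice $y \parallel z$ can occur, the lattice definition of fuzzy strict monotonicity constrains $\mu(T(x,\cdot))$ only along strict comparabilities $y<z$, and your meet/join computation correctly shows that the natural repair fails --- the inequalities $\mu(T(x,y\wedge z))>\mu(T(x,y)),\,\mu(T(x,y\wedge z))>\mu(T(x,z))$ and their duals for $y\vee z$ are fully consistent with $\mu(T(x,y))=\mu(T(x,z))$ and $y\neq z$. Importing the proof of Proposition \ref{pro3.6} tacitly assumes the trichotomy of the chain $[0,1]$, so the paper's proof establishes the cancellation law only when $y$ and $z$ are comparable (e.g., when $L$ is a chain); as stated for a general bounded lattice, the proposition is not proved. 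Second, the $x=1$ edge case is also real, and in fact already afflicts Proposition \ref{pro3.6} itself: there strict monotonicity is postulated only for $0<x<1$, yet the proof invokes it for all $x>0$; at $x=1$ the hypothesis collapses to $\mu(y)=\mu(z)$, which yields $y=z$ only if $\mu$ is injective, and injectivity is not a consequence of fuzzy strict monotonicity. One caution about your own text: the closing claim that treating $x=1$ as degenerate ``closes the proof'' overstates matters --- that is not a proof step, and the incomparable case remains open. What you actually have (and what the paper has) is a proof under the additional hypotheses that $y,z$ are comparable and $x\neq 1$; an honest fix would either restrict the proposition accordingly or strengthen the definition of fuzzy strict monotonicity so that it says something about incomparable arguments.
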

\begin{proof}
	The proof is similar to Proposition \ref{pro3.6}.
\end{proof}
\begin{proposition}
	If $\mu$ satisfies the fuzzy cancellation law, then $\mu$ satisfies the fuzzy conditional cancellation law.
\end{proposition}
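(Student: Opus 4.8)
The plan is to mirror, almost verbatim, the argument used for Proposition~\ref{pro3.7} in the $[0,1]$ setting; the only adjustment is that the absorbing behaviour of the bottom element $0$ must now be deduced from the lattice $t$-norm axioms rather than taken for granted. First I would fix $x,y,z\in L$ and assume the hypothesis of the conditional law, namely $\mu(T(x,y))=\mu(T(x,z))>\mu(0)$. Since this contains in particular the equality $\mu(T(x,y))=\mu(T(x,z))$, the fuzzy cancellation law, which by assumption $\mu$ satisfies, applies directly and yields the dichotomy $x=0$ or $y=z$.

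The next step is to eliminate the branch $x=0$. Here I would first record that the bottom element is absorbing, i.e. $T(0,w)=0$ for every $w\in L$. This is not one of the four defining axioms of a $t$-norm on a bounded lattice, but it follows from two of them: the boundary condition gives $T(0,1)=0$ (taking the first argument to be $0$ and using $b=1$), and monotonicity together with $w\leq 1$ gives $T(0,w)\leq T(0,1)=0$; since $0$ is the bottom element, $T(0,w)=0$. Consequently, if $x=0$ then $T(x,y)=T(0,y)=0$ and likewise $T(x,z)=0$, so $\mu(T(x,y))=\mu(T(x,z))=\mu(0)$, contradicting the strict inequality $\mu(T(x,y))>\mu(0)$ in the hypothesis.

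Thus the branch $x=0$ is impossible and we are forced into $y=z$. Applying $\mu$ to both sides then gives $\mu(y)=\mu(z)$, which is exactly the conclusion of the fuzzy conditional cancellation law. I expect no genuine obstacle in this argument: the single point requiring any care is the auxiliary fact $T(0,w)=0$ flagged above, and it is worth noting that the conclusion $\mu(y)=\mu(z)$ is in fact weaker than the $y=z$ we actually derive, so nothing beyond this is needed.
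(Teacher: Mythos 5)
Your proof is correct and follows essentially the same route as the paper's: apply the fuzzy cancellation law to obtain $x=0$ or $y=z$, rule out the branch $x=0$ because it would force $\mu(T(x,y))=\mu(T(x,z))=\mu(0)$, contradicting the strict inequality, and conclude $\mu(y)=\mu(z)$. Your only addition is the explicit verification that $0$ is absorbing, i.e.\ $T(0,w)=0$ for all $w\in L$, via the boundary condition and monotonicity --- a detail the paper leaves implicit by simply declaring the lattice version ``similar to Proposition~\ref{pro3.7}''.
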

\begin{proof}
	The proof is similar to Proposition \ref{pro3.7}.
\end{proof}

\begin{definition}
	Let $T$ be a $t$-norm on a bounded lattice $L$, $X$  a set. The mapping $E_X:X\times X\rightarrow L$ is called $T$-fuzzy equality on $X$ with respect to $L$ if and only if the following conditions hold:
	\begin{enumerate}[(1)]
		\item $E_X(x, x) = 1 ,\forall x\in X.$
		\item $ E_X(x, y) = E_X(y, x),\forall x, y\in X.$
		\item $ T(E_X(x, y),E_X(y, z)) \leq E_X(x, z),\forall x, y, z \in X.$
	\end{enumerate}
\end{definition}
If  $E_X(x, y) = 1 \Rightarrow x = y$, then $T$-fuzzy equality $E_X$ seprates points.

\begin{definition}
	Let $T$ be a $t$-norm on a bounded lattice $L$, $E$ a $T$-fuzzy equality on $M$, then $\tilde{\circ}: M\times M\times M\rightarrow L$ is called a $T$-vague binary operation on $M$ if and only if  for all $x,y,z,x',y',z'\in M$
	\begin{enumerate}[(1)]
		\item $T(\tilde{\circ}(x,y,z),E(x,x'),E(y,y'),E(z,z'))\leq \tilde{\circ}(x',y',z').$
		\item $T(\tilde{\circ}(x,y,z),\tilde{\circ}(x,y,z'))\leq E(z,z').$
		\item $\forall x,y\in M,\exists z\in M,\tilde{\circ}(x,y,z)=1.$
	\end{enumerate}
\end{definition}

\begin{definition}
	Let $T$ be a $t$-norm on a bounded lattice $L$, $E$ a $T$-fuzzy equality on a monoid $M$, $\tilde{\circ}$  a  $T$-vague binary operation on $M$, then $(M,\tilde{\circ})$ is called $T$-vague monoid if and only if
	\begin{enumerate}[(1)]
		\item  For all $ x,y,z,d,m,q,w\in M$,$$ T(\tilde{\circ}(y,z,d),\tilde{\circ}(x,d,m),\tilde{\circ}(x,y,q),\tilde{\circ}(q,z,w))\leq E(m,w).$$
		\item  There exists an element $e\in M$ such that for each $a\in M$,
		$$T(\tilde{\circ}(e,a,a), \tilde{\circ}(a,e,a))=1.$$
	\end{enumerate}
\end{definition}
\begin{definition}
	Let $T$ be a $t$-norm on a bounded lattice $L$, $E$ a  $T$-fuzzy equality on a monoid $L$, $\tilde{T}$ a $T$-vague binary operation on $L$. A $T$-vague monoid  $([0,1],\tilde{T})$ is said to be  $T$-vague $t$-norm if and only if
	$$E(T(x,y),z)=\tilde{T}(x,y,z).$$
\end{definition}

\begin{definition}
	If $T$ is a $t$-norm on a bounded lattice $L$, $E$ is $T$-fuzzy equality on a monoid $L$, $\tilde{T}$ is a $T$-vague binary operation on $L$ and  $(L,\leq,0,1,\tilde{T})$ is $T$-vague $t$-norm.
	\begin{enumerate}[(1)]
		\item $\tilde{T}$ is vague strictly monotone if for all $x, y, z, a, b \in M$,
		$$x<y, \tilde{T}(x,z,a)=\tilde{T}(y,z,b) \text{, then }  a<b.$$
		\item $\tilde{T}$ satisfies vague cancellation law if
		$$\forall x,a,b,c \in M,\tilde{T}(a,x,c)=\tilde{T}(b,x,c)\Rightarrow a=b.$$
	\end{enumerate}
\end{definition}
\begin{proposition}\label{15}
	$T$ is a $t$-norm on a bounded lattice $L$, $E$ is $T$-fuzzy equality on a monoid $L$, $\tilde{T}$ is a $T$-vague binary operation on $L$ and  $(L,\leq,0,1,\tilde{T})$ is $T$-vague $t$-norm. If $\tilde{T}$ is vague strictly monotone, then $\tilde{T}$ satisfies vague cancellation law.
\end{proposition}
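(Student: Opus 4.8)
The plan is to transcribe the argument of Proposition \ref{pro12} into the lattice setting and then isolate the one place where passing from a chain to a partial order genuinely changes the situation. Unfolding the hypothesis, vague strict monotonicity says that $x<y$ together with $\tilde{T}(x,z,a)=\tilde{T}(y,z,b)$ forces $a<b$ for all $x,y,z,a,b\in L$. To establish the vague cancellation law I would assume $\tilde{T}(a,x,c)=\tilde{T}(b,x,c)$ and argue that $a=b$, proceeding by contradiction: suppose $a\neq b$.

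First I would dispose of the case where $a$ and $b$ are comparable, which is exactly the interval argument of Proposition \ref{pro12}. If $a<b$, I match the monotonicity template by reading the first slot as the varying argument (with $a$ in the role of $x$ and $b$ in the role of $y$), the second slot $x$ as the fixed middle argument $z$, and the two coinciding outputs $c$ as the targets playing the roles of $a$ and $b$ in the definition; monotonicity then yields $c<c$, which is impossible. The subcase $b<a$ is symmetric and again produces $c<c$. Hence no comparable pair with $a\neq b$ can satisfy $\tilde{T}(a,x,c)=\tilde{T}(b,x,c)$.

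The hard part is the incomparable case $a\,\|\,b$, which has no analogue in the totally ordered interval $[0,1]$: there $a\neq b$ always means $a<b$ or $b<a$, but in a bounded lattice the stated vague-strict-monotone condition never fires, since its premise requires a strict order relation $x<y$ between the first arguments. I see three ways to close this gap, and deciding between them is the real content of the proof. One may restrict attention to chains, where the two comparable subcases already exhaust $a\neq b$; or one may strengthen the definition of vague strict monotonicity to an order-reflecting form that also constrains incomparable inputs; or one may try to exploit the functionality axiom $T(\tilde{T}(x,y,z),\tilde{T}(x,y,z'))\leq E(z,z')$ of the $T$-vague binary operation together with the defining identity $\tilde{T}(x,y,z)=E(T(x,y),z)$. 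I expect the last route to be least promising as stated, since that axiom only controls the third slot and does not obviously transfer to cancellation in the first argument; the incomparable case therefore looks like it will force either an added chain hypothesis or a reinforced monotonicity definition, and that is where I anticipate the genuine difficulty to lie.
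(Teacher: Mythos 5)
Your comparable-case argument is precisely the paper's proof: the paper disposes of Proposition \ref{15} with the single line ``The proof is similar to Proposition \ref{pro12}'', i.e., it transplants the total-order dichotomy ($a<b$ or $b<a$, each feeding the monotonicity template to produce the contradiction $c<c$) into the lattice setting without further comment. Your matching of the slots (first argument varying, second fixed, coinciding outputs $c$ in the target positions) is exactly the argument of Proposition \ref{pro12}, and that part is correct as far as it goes.

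The incomparable case you flag is a genuine gap --- and it is a gap in the paper's proof, not only in your attempt. When $a \,\|\, b$, the premise $x<y$ of vague strict monotonicity never fires, so the hypothesis yields no constraint at all, yet the cancellation law as defined quantifies over all $a,b\in L$. Your skepticism about your third route is also justified: by the defining identity $\tilde{T}(x,y,z)=E(T(x,y),z)$, the cancellation hypothesis $\tilde{T}(a,x,c)=\tilde{T}(b,x,c)$ is merely an equality of two membership values $E(T(a,x),c)=E(T(b,x),c)$ in $L$, which imposes no order relation between $a$ and $b$, and the functionality axiom $T(\tilde{T}(x,y,z),\tilde{T}(x,y,z'))\leq E(z,z')$ constrains only the third slot, so it cannot be converted into first-slot cancellation. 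A natural repair through the meet fails as well: from $a\wedge b<a$ one would want to invoke monotonicity, but this requires exhibiting some $c'$ with $\tilde{T}(a\wedge b,x,c')$ exactly equal to the common value $\tilde{T}(a,x,c)$, and nothing in the axioms guarantees such a $c'$ exists. So the proposition is actually established only for comparable $a,b$; closing it requires either restricting to chains (where it collapses back to Proposition \ref{pro12}) or strengthening vague strict monotonicity to constrain incomparable first arguments --- exactly the alternatives you list. Your proposal is therefore not a complete proof, but it honestly reproduces everything the paper's proof actually contains and correctly locates the step that fails in a general bounded lattice.
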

\begin{proof}
	The proof is similar to Proposition \ref{pro12}.
\end{proof}

\section{Other fuzzy methods}\label{SEC6}
In the definition of fuzzy monoid, the conjunctive operation $\wedge$ can be replaced by other operators, so as to get more fuzzy methods.

\begin{definition}\cite{Vagueandfuzzyt-normsandt-conorms}
	Let $(H, \ast)$ be a monoid, $T$ a $t$-norm, $e$ its identity element and a fuzzy subset of $H$. $\sigma$ is a $T$-fuzzy
	submonoid of $H$ is equivalent to the following conditions.
	\begin{enumerate}[(1)]
		\item
		$T(\sigma(a),\sigma(b)) \le \sigma(a\ast b)\quad  \forall a, b \in H.$
		\item
		$\sigma (e) = 1.$
	\end{enumerate}
\end{definition}

\subsection{Fuzzy submonoid about aggregation function}
In addition to $t$-norm, we can also use aggregation functions, uninorms, nullnorms and other operators to generate $\wedge$.
\begin{definition}
	Let $A$ be an aggregation operator, ($M$, $\circ$) a monoid, $e$ its identity element and $\mu$ a fuzzy subset of $M$. $\mu$ is a $A$-fuzzy submonoid of $M$ if and only if
	\begin{enumerate}[(1)]
		\item 	 $A(\mu(x_1),\cdots,\mu(x_n))\leq \mu(x_1 \circ \cdots \circ x_n), \forall x_1,\cdots,x_n \in M. $
		\item	$\mu(e) = 1$.
	\end{enumerate}
\end{definition}

\begin{proposition}\label{16}
	Let $(M,\circ)$ be a monoid and $\mu$ a $A$-fuzzy submonoid of $M$. Then the core $H$ of $\mu$ (i.e. the set of elements $x$ of $M$ such that $\mu(x) = 1$) is a submonoid of $M$.
\end{proposition}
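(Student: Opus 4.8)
The plan is to verify directly the two defining features of a submonoid for the core $H$: that it contains the identity $e$, and that it is closed under $\circ$. Associativity will then come for free, since $\circ$ restricted to $H$ is just the restriction of an associative operation on $M$.

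First I would note that $e \in H$ is immediate from the second axiom of an $A$-fuzzy submonoid, $\mu(e) = 1$. This already shows $H$ is non-empty and that it contains the monoid identity, which will serve as the identity of $H$ as well.

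The substantive step is closure. Given $x, y \in H$, I want to conclude $x \circ y \in H$, and the only tool at hand is the inequality $A(\mu(x_1),\dots,\mu(x_n)) \le \mu(x_1 \circ \cdots \circ x_n)$, which is $n$-ary for the fixed arity $n \ge 2$ of $A$. So I would feed it a suitable $n$-tuple by padding with the identity: take $x_1 = x$, $x_2 = y$, and $x_3 = \cdots = x_n = e$. Since $x, y \in H$ and $\mu(e) = 1$, every argument of $A$ equals $1$, so the boundary condition $A(1,\dots,1) = 1$ yields $1 = A(\mu(x_1),\dots,\mu(x_n)) \le \mu(x \circ y \circ e \circ \cdots \circ e) = \mu(x \circ y)$. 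Because $\mu$ takes values in $[0,1]$ we also have $\mu(x \circ y) \le 1$, and the two inequalities together force $\mu(x \circ y) = 1$, i.e. $x \circ y \in H$.

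Putting these together, $H$ contains $e$, is closed under $\circ$, and inherits associativity from $M$, so $H$ is a submonoid. The one point deserving care — the main (though mild) obstacle — is the passage from the $n$-ary aggregation to ordinary binary closure: it rests on padding the tuple with the identity $e$ and on the boundary value $A(1,\dots,1) = 1$, and does not require any appeal to the monotonicity of $A$.
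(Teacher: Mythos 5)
Your proof is correct and follows essentially the same route as the paper's: padding the $n$-ary aggregation with copies of the identity $e$, using the boundary value $A(1,\dots,1)=1$ to force $\mu(x\circ y)=1$, and letting associativity and the identity pass down from $M$. Your version is simply a more carefully spelled-out account of the same one-line argument.
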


\begin{proof}
	The identity element of $H$ obviously existed and the associativity is inherited.
	Let $x,y \in H$, then
	$$ 1\!= \!A(\mu(x),\mu(y),\mu(e),\cdots,\mu(e)) \leq \mu(x \circ y \circ e \circ \cdots \circ e)= \mu(x\circ y),$$
	therefore, $x \circ y \in H$.
\end{proof}

\begin{example}
	Let $A_{\textnormal{min}}(x_1,\cdots,x_n)=\textnormal{min}(x_1,\cdots,x_n)$. We have a fuzzy subset $\mu_1$, $\mu_1(x)=x$ ,which is the $A_{\textnormal{min}}$-fuzzy submonoid  of $([0,1],T_M)$ . Similarly, We have a fuzzy subset $\mu_2$, $\mu_2(x)=1-x$ ,which is the $A_{\textnormal{min}}$-fuzzy submonoid  of $([0,1],S_M)$. We can see the facts from following proposition.
\end{example}

In fuzzy logic, the unit interval with a $t$-norm or a $t$-conorm is the most important monoid. Thus we will consider fuzzy submonoids of a given $t$-norm or $t$-conorm.

\begin{definition}
	Let $A$ and $T$ be an aggregation operator and a $t$-norm, respectively. An $A$-fuzzy submonoid of $([0,1],T)$ will be called an $A$-fuzzy $t$-subnorm of $T$.
\end{definition}
\begin{definition}
	Let $A$ and $S$ be an aggregation operator and a $t$-conorm, respectively. An $A$-fuzzy submonoid of $([0,1],S)$ will be called an $A$-fuzzy $t$-subconorm of $S$.
\end{definition}

\begin{proposition}\label{17}
	Let  $A(x_1,\cdots,x_n)=\textnormal{min}(x_1,\cdots,x_n)$. $\mu$ is an $A$-fuzzy $t$-subnorm of $T_M$ if and only if $\mu(1)=1$.
\end{proposition}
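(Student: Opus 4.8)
The plan is to prove the two implications separately, observing that the forward direction is essentially definitional while the reverse direction carries all the content. For the forward direction, if $\mu$ is an $A$-fuzzy $t$-subnorm of $T_M$, then by definition it is an $A$-fuzzy submonoid of $([0,1],T_M)$. Since $T_M(x,1)=\textnormal{min}(x,1)=x$, the identity element of this monoid is $1$, so condition (2) of the $A$-fuzzy submonoid definition reads exactly $\mu(1)=1$, which gives the conclusion immediately.

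For the reverse direction, I would assume only $\mu(1)=1$ and show that condition (1) of the $A$-fuzzy submonoid definition holds automatically, for \emph{any} fuzzy subset $\mu$, with no additional hypothesis. The key observation is that both the aggregation function $A$ and the monoid operation $T_M$ are the minimum. Using the $n$-ary extension $T_M(x_1,\cdots,x_n)=\textnormal{min}(x_1,\cdots,x_n)$, let $j$ be an index achieving $x_j=\textnormal{min}(x_1,\cdots,x_n)$. Then
$$A(\mu(x_1),\cdots,\mu(x_n))=\textnormal{min}(\mu(x_1),\cdots,\mu(x_n))\leq \mu(x_j)=\mu(\textnormal{min}(x_1,\cdots,x_n))=\mu(T_M(x_1,\cdots,x_n)),$$
where the inequality holds simply because $\mu(x_j)$ is one of the arguments of the minimum on the left-hand side. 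This verifies condition (1), and combined with the assumed $\mu(1)=1$ it follows that $\mu$ is an $A$-fuzzy submonoid of $([0,1],T_M)$, i.e. an $A$-fuzzy $t$-subnorm of $T_M$.

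I expect essentially no obstacle here: the whole content is the structural coincidence that $A$ and $T_M$ are the same operation, so the defining inequality degenerates into the trivially true fact that the minimum of a finite collection is at most any of its members. The only points requiring care are to invoke the $n$-ary extension of $T_M$ correctly and to identify $1$ as its identity element so that condition (2) becomes $\mu(1)=1$; everything else follows formally.
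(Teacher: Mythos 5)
Your proposal is correct and takes essentially the same route as the paper: the paper's entire proof is the one-line observation that every fuzzy subset of $[0,1]$ satisfies $\textnormal{min}(\mu(x_1),\cdots,\mu(x_n))\leq \mu(\textnormal{min}(x_1,\cdots,x_n))$, which is exactly the inequality you establish via the index $j$ achieving the minimum. Your write-up is merely more explicit, spelling out the minimum-achieving-index justification and the identification of $1$ as the identity element of $([0,1],T_M)$, both of which the paper leaves implicit.
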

\begin{proof}
	Every fuzzy subset of [0,1] satisfies
	$$ \textnormal{min}(\mu(x_1),\cdots,\mu(x_n))\leq \mu(\textnormal{min}(x_1,\cdots,x_n)) $$
	for all $x_1,\cdots,x_n \in [0,1]$.
\end{proof}

\begin{proposition}\label{18}
	Let  $A(x_1,\cdots,x_n)=\textnormal{min}(x_1,\cdots,x_n)$. $\mu$ is an $A$-fuzzy $t$-subconorm of $S_M$  if and only if $\mu(0)=1$.
\end{proposition}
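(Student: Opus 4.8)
The plan is to mirror the argument of Proposition \ref{17}, noting that the monoid underlying $S_M$ has identity element $0$ rather than $1$, since $S_M(x,0)=\textnormal{max}(x,0)=x$. Consequently, $\mu$ being an $A$-fuzzy $t$-subconorm of $S_M$ amounts to the two conditions: (i) $\textnormal{min}(\mu(x_1),\cdots,\mu(x_n))\leq \mu(\textnormal{max}(x_1,\cdots,x_n))$ for all $x_1,\cdots,x_n\in[0,1]$, and (ii) $\mu(0)=1$. The key step is to establish that condition (i) holds for \emph{every} fuzzy subset of $[0,1]$, so that (ii) becomes the only effective requirement.

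First I would fix arbitrary $x_1,\cdots,x_n\in[0,1]$ and observe that $\textnormal{max}(x_1,\cdots,x_n)$ coincides with one of its arguments, say $x_k$. Because the minimum of the values $\mu(x_1),\cdots,\mu(x_n)$ cannot exceed any individual value, we have $\textnormal{min}(\mu(x_1),\cdots,\mu(x_n))\leq \mu(x_k)=\mu(\textnormal{max}(x_1,\cdots,x_n))$, which is exactly (i). This reasoning uses no monotonicity hypothesis on $\mu$; it relies solely on the fact that $\textnormal{max}$ returns one of its inputs, paralleling how $\textnormal{min}$ does so in Proposition \ref{17}.

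Once (i) is seen to be automatic, the equivalence is immediate: $\mu$ is an $A$-fuzzy $t$-subconorm of $S_M$ precisely when the identity-element condition $\mu(0)=1$ is satisfied. I do not expect any genuine obstacle here; the only point demanding attention is correctly identifying $0$, rather than $1$, as the neutral element of $S_M$, which is the lone structural difference from the $t$-norm case of Proposition \ref{17}.
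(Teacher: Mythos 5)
Your proposal is correct and matches the paper's proof: the paper likewise reduces the claim to the observation that $\textnormal{min}(\mu(x_1),\cdots,\mu(x_n))\leq \mu(\textnormal{max}(x_1,\cdots,x_n))$ holds for \emph{every} fuzzy subset of $[0,1]$, leaving $\mu(0)=1$ as the only effective condition. You merely spell out the one-line justification (that $\textnormal{max}$ returns one of its arguments) and the identification of $0$ as the neutral element of $S_M$, both of which the paper leaves implicit.
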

\begin{proof}
	Every fuzzy subset of [0,1] satisfies
	$$ \textnormal{min}(\mu(x_1),\cdots,\mu(x_n))\leq \mu(\text{max}(x_1,\cdots,x_n)) $$
	for all $x_1,\cdots,x_n \in [0,1]$.
\end{proof}

\subsection{Fuzzy submonoid about uninorm}
\begin{definition}
	Let $U$ be a uninorm, ($M$, $\circ$) a monoid, $e$ its identity element and $\mu$ a fuzzy subset of $M$. $\mu$ is a $U$-fuzzy submonoid of $M$ if and only if
	\begin{enumerate}[(1)]
		\item 	 $U(\mu(x),\mu(y))\leq \mu(x \circ y), \forall x, y \in M. $
		\item	$\mu(e) = 1$.
	\end{enumerate}
\end{definition}
\begin{proposition}\label{19}
	Let $U$ be a uninorm, $(M,\circ)$ be a monoid and $\mu$ a $U$-fuzzy submonoid of $M$. Then the core $H$ of $\mu$ (i.e. the set of elements $x$ of $M$ such that $\mu(x) = 1$) is a submonoid of $M$.
\end{proposition}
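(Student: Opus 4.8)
The plan is to mirror the proof of Proposition \ref{16} for the aggregation-function case, verifying the three defining features of a submonoid: containment of the identity, inheritance of associativity, and closure under $\circ$. The identity $e$ lies in $H$ immediately, since the second axiom of a $U$-fuzzy submonoid gives $\mu(e)=1$. Associativity is inherited from $(M,\circ)$ because $H$ carries the restriction of the same operation.

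The only substantive point is closure, and here the key preliminary fact I would establish is that $U(1,1)=1$ for every uninorm $U$. Unlike the boundary condition $A(1,\dots,1)=1$ enjoyed by aggregation functions, this is not listed as an axiom, so it must be deduced from the identity-element axiom (U4) together with monotonicity (U3): since $e\in[0,1]$ we have $e\le 1$, hence $U(1,1)\ge U(1,e)=1$, and as $U$ takes values in $[0,1]$ this forces $U(1,1)=1$.

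With this in hand, closure follows at once. Taking $x,y\in H$, so that $\mu(x)=\mu(y)=1$, the first axiom of a $U$-fuzzy submonoid yields
$$1=U(1,1)=U(\mu(x),\mu(y))\le \mu(x\circ y)\le 1,$$
whence $\mu(x\circ y)=1$, i.e. $x\circ y\in H$. Thus $H$ contains $e$, inherits associativity, and is closed under $\circ$, so it is a submonoid of $M$.

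I anticipate no genuine obstacle: the argument is essentially a transcription of the aggregation-function proof, the single extra observation being that the uninorm's corner value $U(1,1)$ must be derived from (U3)--(U4) rather than assumed. If one prefers to suppress even this minor computation, one may instead insert $U(\mu(x),\mu(y))=U(1,1)\ge U(1,e)=1$ directly into the closure display, relying on monotonicity inline.
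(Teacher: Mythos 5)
Your proof is correct and follows essentially the same route as the paper's: identity and associativity are immediate, and closure comes from $1=U(\mu(x),\mu(y))\le\mu(x\circ y)$. Your one addition, deriving $U(1,1)=1$ from (U3)--(U4) via $U(1,1)\ge U(1,e)=1$, is a point the paper silently assumes, so your version is if anything slightly more complete.
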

\begin{proof}
	The identity element of $H$ obviously already existed and the associativity is inherited.
	Let $x,y \in H$, then
	$$ 1=U(\mu(x),\mu(y))\leq\mu(x\circ y)$$
	therefore, $x \circ y \in H$.
\end{proof}

\begin{definition}
	A discrete uninorm is a submonoid of a uninorm contanining 0 and 1.
\end{definition}
\begin{example}
	Let $U_L=<T_L,e,S_L>_{\textnormal{min}}, L_{n,m}=\{0,\frac{e}{n},\cdots,e,e+\frac{1-e}{m},\cdots,1 \} $, then $L_{n,m}$ is a discrete uninorm of $U_L$.
	
\end{example}
\begin{definition}\label{d:U-fuzzy_t-subnorm_T}
	Let $U$ and $T$ be a uninorm and a $t$-norm, respectively. A $U$-fuzzy submonoid of $([0,1],T)$ will be called a $U$-fuzzy $t$-subnorm of $T$.
\end{definition}
\begin{definition}\label{d:U-fuzzy_t-subconorm_S}
	Let $U$ and $S$ be a uninorm and a $t$-conorm, respectively. A $U$-fuzzy submonoid of $([0,1],S)$ will be called a $U$-fuzzy $t$-subconorm of $S$.
\end{definition}

\begin{theorem}\label{43}
	For a monoid $M$ with identity element $e$, if a uninorm $U$ is disjunctive, then $\mu$ is $U$-fuzzy submonoid of $(M,\circ)$ if and only if $\mu \equiv 1$.
\end{theorem}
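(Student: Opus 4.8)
The plan is to isolate one elementary consequence of disjunctivity and then read off both directions from it. The key observation is that for a disjunctive uninorm the entire top row is saturated: since $U(1,0)=1$ by the definition of disjunctivity, monotonicity in the second argument gives $U(1,t)\ge U(1,0)=1$ for every $t\in[0,1]$, hence $U(1,t)=1$ identically on $[0,1]$. In particular $U(1,1)=1$. I expect this single monotonicity step to carry essentially all the weight of the theorem.

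With this saturation fact in hand, the converse (``if'') direction is immediate: I would check that the constant map $\mu\equiv 1$ satisfies condition (2) trivially, since $\mu(e)=1$, and satisfies condition (1) because $U(\mu(x),\mu(y))=U(1,1)=1\le 1=\mu(x\circ y)$ for all $x,y$. Hence $\mu\equiv 1$ is a $U$-fuzzy submonoid of $(M,\circ)$.

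For the forward (``only if'') direction, suppose $\mu$ is a $U$-fuzzy submonoid, so that $\mu(e)=1$ and $U(\mu(x),\mu(y))\le\mu(x\circ y)$ for all $x,y\in M$. First I would fix an arbitrary $x\in M$ and specialize condition (1) to the pair $(e,x)$; since $e$ is the monoid identity, $e\circ x=x$, which yields $U(\mu(e),\mu(x))\le\mu(x)$. Substituting $\mu(e)=1$ turns this into $U(1,\mu(x))\le\mu(x)$, and invoking the saturation fact $U(1,\mu(x))=1$ collapses it to $1\le\mu(x)$. Since $\mu(x)\le 1$ always holds, we conclude $\mu(x)=1$; as $x$ was arbitrary, $\mu\equiv 1$.

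I do not anticipate a genuine obstacle, as the whole argument reduces to the observation that disjunctivity plus monotonicity forces $U(1,\cdot)\equiv 1$, which then trivializes the submonoid inequality. The one point that requires care is the notational clash between the monoid identity (written $e$ in the theorem) and the neutral element of $U$ (also written $e$ in the definition of uninorm); I would therefore be explicit that the step $e\circ x=x$ uses the monoid identity, whereas the collapse of the inequality is driven by the disjunctivity property $U(1,0)=1$ of the uninorm and does not require its neutral element at all.
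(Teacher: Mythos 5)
Your proof is correct and follows essentially the same route as the paper: both derive the saturation fact $U(1,\cdot)\equiv 1$ from disjunctivity plus monotonicity, then specialize the submonoid inequality at the identity element $e$ (using $\mu(e)=1$) to force $1\le\mu(x)$ for every $x$. Your explicit remark distinguishing the monoid identity from the uninorm's neutral element is a small point of care the paper omits, but it does not change the argument.
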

\begin{proof}
	$U$ is disjunctive, so $U(0,1)=1$. Thus for every $x\in [0,1]$, we have $U(x,1)=1$.
	
	If $\mu$ is $U$-fuzzy submonoid of $([0,1],M)$, then $\mu(e)=1$ and
	$$ U(\mu(x),\mu(y))\leq \mu(x\circ y).$$
	For all $x \in [0,1]$, let $y=e$, we can see that $1=U(\mu(x),1)\leq \mu(x)$.
	In the other hand, if $\mu \equiv 1$, then  $\mu$ obviously is $U$-fuzzy submonoid of $([0,1],M)$.
\end{proof}

\begin{corollary}
	For a $t$-norm $T$, if a uninorm $U$ is disjunctive, then $\mu$ is $U$-fuzzy $t$-subnorm of ([0,1],\,$T$) if and only if $\mu \equiv 1$.
\end{corollary}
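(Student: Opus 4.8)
The plan is to obtain this corollary as an immediate specialization of Theorem \ref{43}, since a $t$-norm is precisely a particular kind of monoid. Recall from the discussion following the definition of monoids in Section \ref{SEC2} that, from an algebraic point of view, a $t$-norm $([0,1],T)$ is a commutative monoid whose identity element is $1$. Thus the object $([0,1],T)$ is a legitimate instance of the monoid $(M,\circ)$ appearing in the hypothesis of Theorem \ref{43}, with the identity element $e$ taken to be $1$.

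First I would invoke Definition \ref{d:U-fuzzy_t-subnorm_T}, which declares that a $U$-fuzzy $t$-subnorm of $T$ is nothing other than a $U$-fuzzy submonoid of the monoid $([0,1],T)$. This reduces the statement to be proved to the corresponding statement about $U$-fuzzy submonoids, which is exactly what Theorem \ref{43} addresses. Next I would apply Theorem \ref{43} directly, taking $M=([0,1],T)$ and $e=1$: since $U$ is assumed disjunctive, the theorem yields that $\mu$ is a $U$-fuzzy submonoid of $([0,1],T)$ if and only if $\mu\equiv 1$. Translating back through Definition \ref{d:U-fuzzy_t-subnorm_T} gives precisely the claim that $\mu$ is a $U$-fuzzy $t$-subnorm of $([0,1],T)$ if and only if $\mu\equiv 1$.

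There is essentially no genuine obstacle here, because the corollary is a direct instantiation rather than a new argument; the only point that warrants a moment's attention is checking that the boundary condition used in the proof of Theorem \ref{43} is correctly inherited. In that proof one sets the second argument equal to the identity $e$ and uses $\mu(e)=1$ together with $U(x,1)=1$ (valid since $U(0,1)=1$ for a disjunctive uninorm, and $U$ is monotone) to force $\mu(x)\geq 1$ for every $x$. For the $t$-norm case the identity is $1$, and the requirement $\mu(1)=1$ is built into the definition of a $U$-fuzzy submonoid of $([0,1],T)$, so the specialization goes through without modification. The converse direction is trivial, as $\mu\equiv 1$ manifestly satisfies both conditions of the definition.

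Accordingly, the proof I would write is short: cite Theorem \ref{43} applied to the commutative monoid $([0,1],T)$ with identity $1$, and appeal to Definition \ref{d:U-fuzzy_t-subnorm_T} to rephrase the conclusion in the language of $t$-subnorms.
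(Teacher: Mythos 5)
Your proposal is correct and matches the paper's intent exactly: the paper states this corollary without a separate proof, treating it as an immediate specialization of Theorem~\ref{43} to the commutative monoid $([0,1],T)$ with identity element $1$, via Definition~\ref{d:U-fuzzy_t-subnorm_T}. Your additional check that the boundary condition $\mu(1)=1$ and the monotonicity argument $U(x,1)\geq U(0,1)=1$ carry over is a sound (if strictly unnecessary) verification of the same reasoning used in the theorem's proof.
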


\begin{corollary}
	For a $t$-conorm $S$, if a uninorm $U$ is disjunctive, then $\mu$ is $U$-fuzzy  $t$-subconorm of $([0,1],S)$ if and only if $\mu\equiv 1$.
\end{corollary}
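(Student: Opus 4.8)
The plan is to recognize this corollary as an immediate specialization of Theorem \ref{43}. First I would observe that every $t$-conorm $S$ endows $[0,1]$ with the structure of a commutative monoid whose identity element is $0$: associativity is (S2), commutativity is (S1), and the boundary condition (S4), namely $S(x,0)=x$, says precisely that $0$ is a two-sided identity. Thus $([0,1],S)$ is a monoid in exactly the sense demanded by Theorem \ref{43}, the only difference from the $t$-norm setting being that the identity is $0$ rather than $1$.

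Next I would invoke Definition \ref{d:U-fuzzy_t-subconorm_S}, which defines a $U$-fuzzy $t$-subconorm of $([0,1],S)$ to be nothing other than a $U$-fuzzy submonoid of the monoid $([0,1],S)$. Having identified the monoid structure in the previous step, Theorem \ref{43} then applies verbatim with $M=([0,1],S)$ and identity $e=0$, yielding at once that $\mu$ is a $U$-fuzzy submonoid, equivalently a $U$-fuzzy $t$-subconorm of $S$, if and only if $\mu\equiv 1$.

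If a self-contained argument is preferred to a bare citation, I would reproduce the two-line proof of Theorem \ref{43} in this setting. Disjunctivity of $U$ means $U(1,0)=1$, so by commutativity $U(0,1)=1$ as well, and monotonicity in the first argument gives $U(a,1)\geq U(0,1)=1$, hence $U(a,1)=1$ for every $a\in[0,1]$. For the forward direction I would set $y=0$ in the defining inequality $U(\mu(x),\mu(y))\leq\mu(S(x,y))$ and use $\mu(0)=1$ together with $S(x,0)=x$ to obtain $1=U(\mu(x),1)\leq\mu(x)$, forcing $\mu(x)=1$ for all $x\in[0,1]$. The converse is immediate, since the constant map $\mu\equiv 1$ trivially satisfies both conditions of a $U$-fuzzy submonoid.

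I do not expect any genuine obstacle here: the entire content is the observation that the identity of the conorm monoid is $0$, after which the disjunctivity hypothesis is exactly what collapses $\mu$ to the constant $1$. The only point deserving a moment's care is keeping the roles of $0$ and $1$ straight when reading off the relevant boundary value of $U$, which is why I would make the commutativity step $U(0,1)=U(1,0)=1$ explicit.
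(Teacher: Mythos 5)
Your proposal is correct and matches the paper exactly: the paper states this corollary without proof, immediately after Theorem~\ref{43}, precisely because Definition~\ref{d:U-fuzzy_t-subconorm_S} makes a $U$-fuzzy $t$-subconorm of $S$ a $U$-fuzzy submonoid of the monoid $([0,1],S)$ with identity $e=0$, which is how you argue. Your explicit care with the identity being $0$ and with $U(0,1)=U(1,0)=1$ is a sound (and slightly more careful) rendering of the same two-line argument used for Theorem~\ref{43}.
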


\begin{corollary}
	For arbitary uninorm $U$ in $\mathscr{U}_{max}$, if a fuzzy subset $\mu$ is $U$-fuzzy submonoid of $M$, then $\mu \equiv 1$.
\end{corollary}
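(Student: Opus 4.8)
The plan is to reduce this corollary directly to Theorem \ref{43}, which already resolves the disjunctive case for an arbitrary monoid. The only genuinely new ingredient is the observation that every uninorm in the class $\mathscr{U}_{max}$ is disjunctive; once this is established, the conclusion follows immediately.

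First I would recall from the structure theorem (Fodor) that a uninorm $U$ belongs to $\mathscr{U}_{max}$ precisely when $U(0,1) = 1$; indeed, for $e \in (0,1)$ the point $(0,1)$ lies in $A(e)$, where $U$ coincides with the maximum, giving $U(0,1) = \textnormal{max}(0,1) = 1$. Invoking the commutativity axiom (U1) then yields $U(1,0) = U(0,1) = 1$. Since a uninorm is called disjunctive exactly when $U(1,0) = 1$, this shows that an arbitrary $U \in \mathscr{U}_{max}$ is disjunctive.

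With disjunctivity in hand, I would apply Theorem \ref{43}: for a monoid $M$ with identity $e$, a fuzzy subset $\mu$ is a $U$-fuzzy submonoid of $(M, \circ)$ if and only if $\mu \equiv 1$. In particular, the hypothesis that $\mu$ is a $U$-fuzzy submonoid of $M$ forces $\mu \equiv 1$, which is exactly the assertion of the corollary.

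I do not anticipate any substantive obstacle. The real work was done in Theorem \ref{43}, whose argument uses $U(0,1) = 1$ together with monotonicity to get $U(x,1) = 1$ for all $x$, and then sets $y = e$ in the submonoid inequality to obtain $1 = U(\mu(x), \mu(e)) = U(\mu(x), 1) \leq \mu(x \circ e) = \mu(x)$. The passage from $\mathscr{U}_{max}$ membership to disjunctivity is a one-line consequence of commutativity, so the corollary is essentially a restatement of the theorem specialized to this family of uninorms.
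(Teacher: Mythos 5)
Your proposal is correct and matches the paper's intended argument: the paper states this corollary without proof, as an immediate consequence of the preceding theorem on disjunctive uninorms, which is exactly the reduction you perform. Your one extra step --- noting that any $U \in \mathscr{U}_{max}$ satisfies $U(0,1)=1$ (since $(0,1)$ lies in $A(e)$ where $U$ acts as the maximum) and hence $U(1,0)=1$ by commutativity, i.e.\ $U$ is disjunctive --- is precisely the observation the paper leaves implicit.
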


\begin{proposition}\label{20}
	Let $U$ be a uninorm and $B$ be a set where $B=\{x\in[0,1]\,|\, \mu(x)\in [e,1]\}$.
	\begin{equation*}
		U(x,y)=\left\{
		\begin{array}{lcl}
			eT(\frac{x}{e},\frac{y}{e}),& &(x,y)\in[0,e]^2,\\
			e+(1-e)S_M(\frac{x-e}{1-e},\frac{y-e}{1-e}),	& &(x,y)\in (e,1]^2,\\
			\textnormal{min}(x,y),	& &otherwise.
		\end{array}
		\right.
	\end{equation*}
	where $\mu$ is a fuzzy subset. Furthermore, $\mu$ is $U$-fuzzy $t$-subnorm of $([0,1],T_M)$ if and only if $\mu$ is decreasing on $B$ and $\mu(1)=1$.
\end{proposition}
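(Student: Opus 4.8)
The plan is to reduce the defining inequality of a $U$-fuzzy $t$-subnorm to a pure monotonicity statement by exploiting the explicit piecewise form of $U$. First I would dispose of the easy half: the boundary requirement $\mu(1)=1$ is just part (2) of the definition of a $U$-fuzzy submonoid applied to $([0,1],T_M)$, whose identity element is $1$, so it holds in both directions for free and all the content lies in condition (1), namely
\[
U(\mu(x),\mu(y))\le \mu(T_M(x,y))=\mu(\min(x,y)),\qquad \forall\, x,y\in[0,1].
\]
Since both $U$ and $\min$ are commutative, I would fix $x\le y$ without loss of generality, so that the requirement becomes $U(\mu(x),\mu(y))\le \mu(x)$.

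Next I would split into cases according to the three branches of $U$, selected by the position of the pair $(\mu(x),\mu(y))$ relative to $e$. If both $\mu(x),\mu(y)\in[0,e]$, then using that the minimum is the largest $t$-norm (so $T\le T_M$) gives
\[
U(\mu(x),\mu(y))=eT\!\Big(\tfrac{\mu(x)}{e},\tfrac{\mu(y)}{e}\Big)\le e\min\!\Big(\tfrac{\mu(x)}{e},\tfrac{\mu(y)}{e}\Big)=\min(\mu(x),\mu(y))\le\mu(x),
\]
so the inequality is automatic. In the mixed branch (exactly one of the two values exceeds $e$) one has $U(\mu(x),\mu(y))=\min(\mu(x),\mu(y))\le\mu(x)$, again automatic. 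The only branch imposing a genuine constraint is the one with both $\mu(x),\mu(y)\in(e,1]$: there $S_M=\max$ yields $U(\mu(x),\mu(y))=\max(\mu(x),\mu(y))$, so the requirement $\max(\mu(x),\mu(y))\le\mu(x)$ is equivalent to $\mu(y)\le\mu(x)$.

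Finally I would observe that the pairs landing in this last branch are exactly those with $x,y\in B$ (memberships at least $e$, and for the $\max$ branch strictly above $e$), and that on them the surviving condition ``$x\le y\Rightarrow \mu(x)\ge\mu(y)$'' is precisely the assertion that $\mu$ is decreasing on $B$. Reading the chain of equivalences forward gives necessity and backward gives sufficiency, which together with the automatic $\mu(1)=1$ proves the proposition. The step I expect to be most delicate is the treatment of the boundary value $\mu=e$: a point $x$ with $\mu(x)=e$ belongs to $B$ yet is processed by the $[0,e]$ branch of $U$ rather than the $\max$ branch, so I would have to check carefully that such points impose no additional requirement and remain consistent with the monotonicity claimed on $B$. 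Making the case split on $[0,e]$ versus $(e,1]$ exhaustive and correctly aligned with the definition of $B$ is where the argument must be pinned down precisely.
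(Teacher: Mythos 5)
Your case analysis is exactly the paper's proof: split on where $(\mu(x),\mu(y))$ lies, dispose of the $[0,e]^2$ branch via $T\le T_M$ together with $\min(\mu(x),\mu(y))\le\mu(\min(x,y))$, dispose of the mixed branch since $U$ is $\min$ there, and extract the decreasing condition from the upper branch where $U$ is the (rescaled) maximum. The one place you stop short --- the boundary value $\mu(x)=e$ --- is, however, not a routine verification but a genuine defect of the statement, which the paper's own proof silently glosses: the paper runs its case (2) over $(\mu(x),\mu(y))\in[e,1]^2$ even though the displayed $U$ equals the maximum only on $(e,1]^2$, so a pair like $(e,1)$ actually falls in the $\min$ branch. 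Consequently, with $B=\{x:\mu(x)\in[e,1]\}$ as literally defined, the ``only if'' direction fails: take $\mu(x)=e$ for $x\in[0,1)$ and $\mu(1)=1$. This $\mu$ is a $U$-fuzzy $t$-subnorm of $([0,1],T_M)$ --- every pair of membership values lands in the $eT$ branch or the $\min$ branch and yields $U(\mu(x),\mu(y))\le e\le\mu(\min(x,y))$, except $(\mu(1),\mu(1))=(1,1)$, which gives $1\le\mu(1)$ --- yet $B=[0,1]$ and $\mu$ is increasing there. Your chain of equivalences in fact shows the defining inequality is equivalent to $\mu$ being decreasing on $\{x:\mu(x)\in(e,1]\}$ (together with $\mu(1)=1$), so the proposition becomes true once $B$ is redefined with the open interval $(e,1]$; no amount of careful checking can rescue it for $B$ as stated, and your instinct that this alignment of the case split with the definition of $B$ is where the argument must be pinned down was exactly right.
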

\begin{proof}
	We need to distinguish some cases:
	\begin{enumerate}[(1)]
		\item
		If $(\mu(x),\mu(y))\in [0,e]^2$, then one has that
		\begin{align*}
			U((\mu(x),\mu(y))) &=eT(\frac{\mu(x)}{e},\frac{\mu(y)}{e})\\
			&\leq \textnormal{min}(\mu(x),\mu(y))\\
			& \leq \mu(\textnormal{min}(x,y)).
		\end{align*}
		\item
		If $(\mu(x),\mu(y))\in [e,1]^2$, it follows that		
		$U((\mu(x), \mu(y)))=\text{max}(\mu(x),\mu(y)) \leq \mu(\textnormal{min}(x,y))$
		if and only if
		$x\leq y$, then $\mu(y)\leq \mu(x)$.
		\item
		If $(\mu(x),\mu(y))\in [0,e]\times [e,1]$, one concludes that\\
		$$U(\mu(x),\mu(y))=\textnormal{min}(\mu(x),\mu(y)) \leq \mu(\textnormal{min}(x,y)).$$	
	\end{enumerate}
\end{proof}
\begin{example}
	\begin{equation*}
		\mu(x)=\left \{
		\begin{array}{lcl}
			x,& &0\leq x\leq e,\\
			1,& &e\leq x \leq 1.
		\end{array}
		\right.	
	\end{equation*}
	
\end{example}
Not all fuzzy subset $\mu$ of a monoid $M$ can find a corresponding a uninorm $U$ such that  $\mu$ is a $U$-fuzzy submonoid of $M$. Then the following two propositions hold.
\begin{proposition}\label{21}
	Let $\mu(x)=x$ be a fuzzy subset, $T$ be a $t$-norm. There is no uninorm $U$ that $\mu$ is a $U$-fuzzy $t$-subnorm of $([0,1],T)$.
\end{proposition}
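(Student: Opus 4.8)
The plan is to argue by contradiction. Suppose some uninorm $U$ with neutral element $e$ (which need not coincide with the identity $1$ of the $t$-norm monoid) makes $\mu(x)=x$ a $U$-fuzzy $t$-subnorm of $([0,1],T)$ in the sense of Definition~\ref{d:U-fuzzy_t-subnorm_T}. The first step is to unfold the two defining conditions under $\mu(x)=x$. Condition (2) asks $\mu(1)=1$, which holds automatically, so all the content sits in condition (1): substituting $\mu(x)=x$, $\mu(y)=y$ and $\mu(T(x,y))=T(x,y)$ rewrites it as
$$U(x,y)\le T(x,y)\qquad\text{for all }x,y\in[0,1].$$
Thus the task reduces to showing that no uninorm can lie pointwise below a $t$-norm.

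Next I would record the elementary bound $T(x,y)\le\min(x,y)$, immediate from monotonicity and the boundary condition: $T(x,y)\le T(x,1)=x$ and symmetrically $T(x,y)\le y$. Chaining this with the displayed inequality gives $U(x,y)\le\min(x,y)$ for every pair $(x,y)$.

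The crux is then to play the neutral element of $U$ against this bound. By the identity axiom (U4) one has $U(e,y)=y$ for all $y$, so the bound forces
$$y=U(e,y)\le\min(e,y)\qquad\text{for all }y\in[0,1].$$
Picking any $y$ with $y>e$ yields $y\le\min(e,y)=e<y$, a contradiction, so no such uninorm exists. It is worth stressing that this uses only the neutral-element axiom together with $T\le\min$, so the full classification of uninorms in $\mathscr{U}_{min}$ and $\mathscr{U}_{max}$ is not needed.

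The point demanding care---and what I expect to be the main obstacle---is the existence of a witness $y>e$, i.e.\ the boundary value $e=1$. For $e\in(0,1)$ (the genuine uninorms of Proposition~\ref{20}) any $y\in(e,1]$ works, and $e=0$ (a $t$-conorm) is covered by the same inequality. When $e=1$ the interval $(e,1]$ is empty and the argument stalls; this is not a gap in the choice of $y$ but reflects that a uninorm with $e=1$ is precisely a $t$-norm (cf.\ the remark after Definition~\ref{def3}), for which the choice $U=T$ already meets $U\le T$. I would therefore formulate the proof for uninorms with $e\in[0,1)$, which is the intended setting.
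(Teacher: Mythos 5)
Your proof is correct and follows essentially the same route as the paper's: both reduce condition (1) under $\mu(x)=x$ to $U(x,y)\le T(x,y)\le\min(x,y)$ and then contradict the neutral-element axiom at $x=e$, $y>e$. Your closing caveat is a genuine improvement rather than a hedge: the paper's proof also relies on picking $y>e$ but never addresses $e=1$, where the statement as literally written fails under the paper's own definition of uninorm (a uninorm with $e=1$ is a $t$-norm, and taking $U=T$ itself, or the drastic product $T_D\le T$, makes $\mu(x)=x$ a $U$-fuzzy $t$-subnorm of $([0,1],T)$), so the restriction to $e\in[0,1)$ that you state explicitly is silently assumed in the paper.
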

\begin{proof}
	Assume that there exists uninorm $U$ with identity element $e$ that $\mu$ is a $U$-fuzzy $t$-subnorm of $([0,1],T)$, then
	$$U(\mu(x),\mu(y))\leq \mu(T(x,y)).$$
	Furthermore, it holds that
	$$U(x,y)\leq T(x,y)\leq \textnormal{min}(x,y),$$
	which contradicts with the case of $x=e, y>e$.
\end{proof}
\begin{proposition}\label{22}
	Let $\mu(x)=1-x$ be a fuzzy subset, $S$ be a $t$-conorm. There is no uninorm $U$ that $\mu$ is a $U$-fuzzy $t$-subconorm of $([0,1],S)$.
\end{proposition}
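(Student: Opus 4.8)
The plan is to mirror the argument used for Proposition \ref{21}, exploiting the duality between $t$-norms and $t$-conorms together with the complementation $\mu(x)=1-x$. First I would argue by contradiction: suppose a uninorm $U$ with identity element $e$ exists such that $\mu$ is a $U$-fuzzy $t$-subconorm of $([0,1],S)$. By definition this means $\mu(0)=1$ (which holds automatically, since $\mu(0)=1-0=1$, consistent with $0$ being the neutral element of $S$) together with
$$U(\mu(x),\mu(y))\le \mu(S(x,y)) \qquad \text{for all } x,y\in[0,1].$$
Substituting $\mu(x)=1-x$ turns this into $U(1-x,1-y)\le 1-S(x,y)$.

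Next I would invoke the elementary fact that every $t$-conorm satisfies $S(x,y)\ge \textnormal{max}(x,y)$; this follows from monotonicity (S3) and the boundary condition (S4), since $S(x,y)\ge S(x,0)=x$ and, by commutativity (S1), $S(x,y)\ge y$. Consequently $1-S(x,y)\le 1-\textnormal{max}(x,y)=\textnormal{min}(1-x,1-y)$, and chaining the two inequalities gives $U(1-x,1-y)\le \textnormal{min}(1-x,1-y)$. Because the map $(x,y)\mapsto(1-x,1-y)$ is a bijection of $[0,1]^2$ onto itself, I may set $a=1-x$ and $b=1-y$ and conclude $U(a,b)\le \textnormal{min}(a,b)$ for every $a,b\in[0,1]$.

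Finally I would extract the contradiction from the identity element of $U$, exactly as in Proposition \ref{21}. Taking $a=e$ and any $b>e$ (equivalently $x=1-e$ and $y<1-e$) yields $U(e,b)=b$ on the left but $\textnormal{min}(e,b)=e<b$ on the right, which is impossible. Hence no such uninorm $U$ can exist, proving the proposition. I do not anticipate any serious obstacle; the only point that needs a little care is the reduction to $U(a,b)\le \textnormal{min}(a,b)$, which rests on the lower bound $S(x,y)\ge \textnormal{max}(x,y)$ and on the observation that the complementation sweeps out the whole unit square, so that the derived inequality is genuinely universal and the neutral-element argument applies.
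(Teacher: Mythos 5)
Your proof is correct and takes essentially the same route as the paper's: substitute $\mu(x)=1-x$ into the defining inequality, bound $1-S(x,y)$ by $\min(1-x,1-y)$ using $S(x,y)\ge \max(x,y)$, and derive the contradiction from the neutral element at $x=1-e$, $y<1-e$ --- the paper uses the identical point of contradiction, only written more tersely. One caveat you share with the paper: the final step needs some $b>e$ (equivalently $y<1-e$) to exist, hence $e<1$; if $e=1$ the uninorm is a $t$-norm and, for instance, $U=T_M$ with $S=S_M$ does make $\mu(x)=1-x$ a $U$-fuzzy $t$-subconorm, so the statement tacitly assumes $e\in[0,1)$.
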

\begin{proof}
	Assume that there exists uninorm $U$ with identity element $e$ that $\mu$ is a $U$-fuzzy $t$-subconorm of $([0,1],S)$, then
	$$U(\mu(x),\mu(y))\leq \mu(S(x,y)),$$
	Furthermore, it holds that
	$$U(1-x,1-y)\leq 1-S(x,y)\leq 1-\text{max}(x,y),$$
	which contradicts with the case of $x=1-e, y< 1-e$.
\end{proof}

\subsection{Fuzzy submonoid about nullnorm}
\begin{definition}
	Let $F$ be a nullnorm, $(M,\circ)$ a monoid, $e$ its identity element and $\mu$ a fuzzy subset of $M$. $\mu$ is a $F$-fuzzy submonoid of $M$ if and only if
	\begin{enumerate}[(1)]
		\item $F(\mu(x), \mu(y))\leq \mu(x\circ y)$, for every $x,y\in[0,1]$
		\item $\mu(e)=1$.
	\end{enumerate}
\end{definition}

\begin{proposition}\label{23}
	Let $(M,\circ)$ be a monoid and $\mu$ be a $F$-fuzzy submonoid of $M$. Then the core $H$ of $\mu $(i.e. the set of elements $x$ of $M$, such that $\mu(x)=1$) is a submonoid of $M$.
\end{proposition}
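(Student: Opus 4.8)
The plan is to follow the same strategy used in the analogous Propositions \ref{16} and \ref{19} for aggregation functions and uninorms, namely to verify the three defining requirements of a submonoid: that $H$ inherits associativity, contains the identity $e$, and is closed under $\circ$. Since $(M,\circ)$ is already a monoid, associativity of $\circ$ restricted to $H$ is automatic, and $e\in H$ follows at once from the defining condition $\mu(e)=1$ of an $F$-fuzzy submonoid. Thus the only genuine content is the closure of $H$ under $\circ$.

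To establish closure, I would take $x,y\in H$, so that $\mu(x)=\mu(y)=1$, and apply the submonoid inequality $F(\mu(x),\mu(y))\le \mu(x\circ y)$. This reduces the entire argument to the single numerical fact $F(1,1)=1$: granting it, one has
$$1=F(1,1)=F(\mu(x),\mu(y))\le \mu(x\circ y)\le 1,$$
which forces $\mu(x\circ y)=1$ and hence $x\circ y\in H$, exactly as in the uninorm case.

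The one step requiring care, and the point at which the nullnorm case genuinely differs from the $t$-norm case, is verifying $F(1,1)=1$, since a nullnorm need not have $1$ as its identity element. I would obtain this directly from the boundary condition (F4) of Definition \ref{def4}: a nullnorm $F$ with absorbing element $k$ satisfies $F(1,x)=x$ for all $x\ge k$. Taking $x=1$ and noting that $1\ge k$ holds for every $k\in[0,1]$, we conclude $F(1,1)=1$. With this identity in hand, the closure argument above goes through verbatim and the proof is complete.

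I do not expect any substantive obstacle here; the only subtlety worth flagging is that one must invoke the correct branch of (F4) — the clause valid for arguments $\ge k$ — rather than treating $1$ as an identity of $F$ outright, which is false for a general nullnorm.
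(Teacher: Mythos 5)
Your proof is correct and follows essentially the same route as the paper's: verify that $e\in H$ from $\mu(e)=1$, inherit associativity, and obtain closure from $1=F(\mu(x),\mu(y))\leq\mu(x\circ y)$. The only difference is that you explicitly justify $F(1,1)=1$ via the branch $F(1,x)=x$ for $x\geq k$ of (F4), a detail the paper's proof leaves implicit, and your flag that $1$ need not be an identity of a general nullnorm is a worthwhile clarification rather than a deviation.
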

\begin{proof}
	The identity element of $H$ obviously existed and the associativity is inherited.
	Let $x,y \in H$, then
	$$ 1=F(\mu(x),\mu(y))\leq\mu(x\circ y),$$
	therefore, $x \circ y \in H$.
\end{proof}
\begin{definition}
	A discrete nullnorm is a submonoid of a nullnorm contanining 0 and 1.
\end{definition}
Let $S$ and $T$ be a $t$-conorm and a $t$-norm, $F=<S,k,T>$ denoted the nullnorms $F$ with absorbing element $k$ as follows
$$
F(x,y)=\left\{
\begin{array}{lcl}
	kS(\frac{x}{k},\frac{y}{k}),& &(x,y)\in[0,k]^2,\\
	(1-k)T(\frac{x-k}{1-k},\frac{y-k}{1-k})+k,	& &(x,y)\in (k,1]^2,\\
	k,	& &otherwise.
\end{array}
\right.
$$
\begin{example}
	Let $F_L $ be a nullnorm where $F_L=<S_L,k,T_L>, L_{n,m}=\{0,\frac{k}{n},\cdots,k,k+\frac{1-k}{m},\cdots,1 \} $, then $L_{n,m}$ is a discrete uninorm of $F_L$.
	
\end{example}
\begin{definition}
	Let $F$ and $T$ be a nullnorm and a $t$-norm, respectively. An $F$-fuzzy submonoid of $([0,1],T)$ will be called an $F$-fuzzy $t$-subnorm of $T$.
\end{definition}
\begin{definition}
	Let $F$ and $S$ be a uninorm and a $t$-conorm, respectively. An $F$-fuzzy submonoid of $([0,1],S)$ will be called an $F$-fuzzy $t$-subconorm of $S$.
\end{definition}

\begin{proposition}\label{24}
	If $\mu$ is a $F$-fuzzy submonoid of $M$ where $F$ is a nullnorm with absorbing element $k$ and $M$ is a monoid with identity element $e$, then $\mu(x)\geq k$ for every $x\in[0,1]$.
\end{proposition}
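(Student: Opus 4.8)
The plan is to exploit the identity condition $\mu(e)=1$ of the $F$-fuzzy submonoid together with the defining inequality, and then squeeze $\mu(x)$ from below using the absorbing element of the nullnorm. The guiding observation is that the submonoid inequality $F(\mu(x),\mu(y))\leq\mu(x\circ y)$ becomes especially useful when one of the arguments is the identity $e$, because there $\mu$ attains its maximal value $1$ and $x\circ e=x$, so the inequality turns into a self-referential bound on $\mu(x)$.

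Concretely, I would first instantiate the defining inequality at $y=e$. Since $\mu(e)=1$ and $x\circ e=x$, this yields
$$F(\mu(x),1)\leq\mu(x)\quad\text{for every }x.$$
The next step is to produce a lower bound for the left-hand side that is independent of $x$. For this I would invoke the absorbing element property (F4), which gives $F(k,t)=k$ for all $t\in[0,1]$, together with the monotonicity (F3) of $F$ in each argument. Because $k\leq 1$, monotonicity in the first coordinate gives $F(1,t)\geq F(k,t)=k$, so that $F(1,t)\geq k$ for every $t\in[0,1]$. Applying commutativity (F1) to rewrite $F(\mu(x),1)=F(1,\mu(x))$, we obtain $F(\mu(x),1)\geq k$. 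Chaining the two bounds gives $k\leq F(\mu(x),1)\leq\mu(x)$, which is precisely the claim.

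The only point requiring care, which I expect to be the main (though modest) obstacle, is the lower bound $F(1,t)\geq k$. One might be tempted to use the boundary relation $F(1,x)=x$ from (F4) directly, but that relation only holds for $x\geq k$; for arguments below $k$ it fails, and there $F(1,x)=x<k$ is impossible only because the true value is pinned up by the absorbing element. The correct route is therefore through $F(k,t)=k$ and monotonicity rather than through the boundary formula, and making that distinction explicit is the crux of the argument. Once this lower bound is established, the remainder is an immediate substitution, so no lengthy computation is needed.
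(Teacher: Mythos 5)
Your proof is correct and follows essentially the same route as the paper's: both instantiate the submonoid inequality at $y=e$ with $\mu(e)=1$ and then use the absorbing element together with monotonicity (F3) to get $k=F(k,t)\leq F(1,\mu(x))=F(\mu(x),1)\leq\mu(x)$. The only cosmetic difference is that the paper monotonizes in the second coordinate ($k=F(\mu(x),k)\leq F(\mu(x),\mu(e))\leq\mu(x)$) while you monotonize in the first and invoke commutativity; your explicit warning that the boundary formula $F(1,x)=x$ only holds for $x\geq k$ is a point the paper leaves implicit.
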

\begin{proof}
	If $\mu$ is a $F$-fuzzy submonoid of $M$, then we have $$F(\mu(x),\mu(y))\leq \mu(x\circ y),$$  $$\mu(e)=1.$$
	Let $y=e$, we obtain that
	$$k=F(\mu(x),k)\leq F(\mu(x),\mu(e))\leq \mu(x).$$
\end{proof}

\begin{corollary}
	If $\mu$ is a $F$-fuzzy $t$-subnorm of $T$ where $F$ is a nullnorm with absorbing element $k$ and $T$ is a $t$-norm, then $\mu(x)\geq k$ for every $x\in[0,1]$.
\end{corollary}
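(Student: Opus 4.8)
The plan is to reduce the statement directly to Proposition \ref{24}, since the corollary is merely that proposition specialized to a particular monoid. Recall that by the definition immediately preceding this corollary, an $F$-fuzzy $t$-subnorm of $T$ is nothing but an $F$-fuzzy submonoid of $([0,1],T)$. Moreover, as noted earlier in the paper, a $t$-norm $T$ makes $([0,1],T)$ into a commutative monoid whose identity element is $1$. Thus the hypotheses of Proposition \ref{24} are satisfied with the specific choice $M=([0,1],T)$ and identity element $e=1$.

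First I would explicitly identify $M=([0,1],T)$ as a monoid with identity $e=1$, so that $\mu$, being an $F$-fuzzy $t$-subnorm of $T$, qualifies as an $F$-fuzzy submonoid of this $M$. Next I would simply invoke Proposition \ref{24}, which guarantees that $\mu(x)\geq k$ for every $x\in[0,1]$. No further computation is needed, because the absorbing-element argument (taking $y=e$ and using $k=F(\mu(x),k)\leq F(\mu(x),\mu(e))\leq\mu(x)$) has already been carried out in full generality in the proof of Proposition \ref{24}.

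There is essentially no obstacle here: the only thing to verify is that the domain $[0,1]$ of the $t$-norm coincides with the domain over which $\mu(x)\geq k$ is claimed, which it does. Hence the proof reduces to a single sentence applying the earlier proposition. For completeness one could restate the one-line inequality chain, but this would duplicate the content of Proposition \ref{24} rather than add anything new.

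\begin{proof}
	A $t$-norm $T$ endows $[0,1]$ with the structure of a monoid $([0,1],T)$ whose identity element is $1$. Since $\mu$ is an $F$-fuzzy $t$-subnorm of $T$, by definition $\mu$ is an $F$-fuzzy submonoid of $([0,1],T)$. Applying Proposition \ref{24} with $M=([0,1],T)$ yields $\mu(x)\geq k$ for every $x\in[0,1]$.
\end{proof}
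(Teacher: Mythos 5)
Your proof is correct and matches the paper's intent exactly: the paper states this corollary without proof precisely because it is the immediate specialization of Proposition \ref{24} to the monoid $M=([0,1],T)$ with identity element $e=1$, which is the reduction you carry out. Nothing further is needed.
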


\begin{corollary}
	If $\mu$ is a $F$-fuzzy $t$-subconorm of $S$ where $F$ is a nullnorm with absorbing element $k$ and $S$ is a $t$-conorm, then $\mu(x)\geq k$ for every $x\in[0,1]$.
\end{corollary}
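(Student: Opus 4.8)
The plan is to recognize this as an immediate specialization of Proposition \ref{24}. The key observation is that a $t$-conorm $S$, by axioms (S1)--(S4), is exactly a commutative monoid on $[0,1]$ whose two-sided identity element is $0$: associativity (S2) gives the semigroup structure, while the boundary condition (S4), $S(x,0)=x$, together with commutativity (S1) furnishes $0$ as identity. Thus $([0,1],S)$ is a monoid in the sense required by Proposition \ref{24}, with $M=[0,1]$, $\circ = S$, and $e=0$.

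First I would unwind the definition of an $F$-fuzzy $t$-subconorm: by definition it is nothing other than an $F$-fuzzy submonoid of the monoid $([0,1],S)$. Hence the hypotheses of the corollary --- that $\mu$ is an $F$-fuzzy $t$-subconorm of $S$ with $F$ a nullnorm having absorbing element $k$ --- are precisely the hypotheses of Proposition \ref{24} applied to the monoid $([0,1],S)$. No further structural verification is needed beyond checking that $S$ supplies a genuine monoid, which I noted above.

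With that identification in place, I would simply invoke Proposition \ref{24} verbatim. Concretely, setting the monoid identity $e=0$ and taking $y=e$ in the submonoid inequality $F(\mu(x),\mu(y))\leq \mu(S(x,y))$, one obtains
$$k = F(\mu(x),k)\leq F(\mu(x),\mu(0))\leq \mu(x),$$
where the first equality uses $F(\mu(x),k)=k$ for the absorbing element $k$, the middle inequality uses $k\leq 1 = \mu(0)=\mu(e)$ together with monotonicity (F3) of $F$, and the last inequality is the defining submonoid condition with $S(x,0)=x$. Since this holds for every $x\in[0,1]$, the conclusion $\mu(x)\geq k$ follows.

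I do not expect any genuine obstacle here: the entire content is the dictionary translation ``$t$-conorm $=$ monoid with identity $0$,'' after which the result is inherited directly from Proposition \ref{24}. The only point warranting a line of care is confirming that $e=0$ is indeed the identity of $([0,1],S)$ (so that $\mu(e)=\mu(0)=1$ is available), which is immediate from (S1) and (S4).
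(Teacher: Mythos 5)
Your proposal is correct and matches the paper's approach exactly: the paper states this corollary without a separate proof precisely because it is the instance of Proposition \ref{24} for the monoid $([0,1],S)$ with identity $e=0$, and your unwound chain $k=F(\mu(x),k)\leq F(\mu(x),\mu(0))\leq \mu(x)$ is the same computation as the paper's proof of that proposition. Your one added point of care --- verifying via (S1) and (S4) that $0$ is the identity so $\mu(0)=1$ is available --- is exactly the right detail to check.
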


\begin{proposition}\label{25}
	A fuzzy subset $\mu$ is a $F_M$-fuzzy $t$-subnorm of $([0,1],T_{\textnormal{min}})$, where $F_M$ is a nullnorm with absorbing element $k$ if and only if $\mu(1)=1$, $\mu(x)\geq k $ for every $x\in[0,1]$.
\end{proposition}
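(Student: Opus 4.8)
The plan is to prove the two implications separately, reading everything off from the definition of an $F_M$-fuzzy $t$-subnorm together with the concrete shape of the nullnorm $F_M = {<}S_M,k,T_M{>}$ and the fact that the identity of $([0,1],T_M)$ is $e=1$.

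For necessity I would argue that almost no work is needed: if $\mu$ is an $F_M$-fuzzy $t$-subnorm of $([0,1],T_M)$, then $\mu(1)=1$ is precisely condition (2) of the definition, and $\mu(x)\geq k$ for all $x$ is immediate from Proposition \ref{24}, since $F_M$ is a nullnorm with absorbing element $k$. Thus this direction is just an appeal to an earlier result.

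The content lies in sufficiency. Assuming $\mu(1)=1$ and $\mu(x)\geq k$ for every $x$, both arguments $\mu(x),\mu(y)$ always lie in $[k,1]$, and the key step is to show that $F_M$ collapses to the minimum there. When $\mu(x),\mu(y)>k$ the middle branch of the formula gives $F_M(\mu(x),\mu(y))=(1-k)T_M(\tfrac{\mu(x)-k}{1-k},\tfrac{\mu(y)-k}{1-k})+k=\min(\mu(x),\mu(y))$; when one argument equals $k$ the pair falls into the constant ``otherwise'' branch, so $F_M(\mu(x),\mu(y))=k=\min(\mu(x),\mu(y))$. Hence $F_M(\mu(x),\mu(y))=\min(\mu(x),\mu(y))$ in all cases. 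I would then close the argument with the elementary inequality $\min(\mu(x),\mu(y))\leq\mu(\min(x,y))=\mu(T_M(x,y))$, valid for any fuzzy subset (take $x\leq y$ so the right-hand side is just $\mu(x)$), exactly as used in Proposition \ref{17}. This verifies condition (1), and with $\mu(1)=1$ already assumed, $\mu$ is an $F_M$-fuzzy $t$-subnorm.

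The hard part, modest as it is, will be the boundary bookkeeping in the collapse of $F_M$ to $\min$: one must notice that a pair with one coordinate equal to $k$ lands in the constant-$k$ branch rather than the $T_M$-branch, yet still agrees with the minimum precisely because $k$ is the floor of $\mu$ guaranteed by the hypothesis. Everything else is a direct appeal to the definitions and to Propositions \ref{24} and \ref{17}.
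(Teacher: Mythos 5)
Your proof is correct and follows essentially the same route as the paper's: necessity read off from the definition together with Proposition \ref{24}, and sufficiency by checking the branches of $F_M$ under the floor $\mu\geq k$, finishing with the generic inequality $\textnormal{min}(\mu(x),\mu(y))\leq\mu(\textnormal{min}(x,y))$ --- exactly the computation the paper spells out (for the same statement with $T_M$) in Proposition \ref{F}, of which its printed proof of Proposition \ref{25} is only a one-case fragment (the $[0,k]^2$ branch, with $e$ written for $k$). The lone quibble: when $\mu(x)=\mu(y)=k$ the pair $(k,k)$ lies in the $[0,k]^2$ branch rather than the ``otherwise'' branch, but there $F_M(k,k)=kS(1,1)=k=\textnormal{min}(k,k)$, so your collapse of $F_M$ to the minimum on $[k,1]^2$ is unaffected.
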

\begin{proof}
	If  $\mu$ is a $F_M$-fuzzy $t$-subnorm of $([0,1],T_{\textnormal{min}})$, then we obviously have  $\mu(1)=1$ and $\mu(x)\geq k$ for every $x\in[0,1]$.
	When $(x,y)\in [0,e]^2$, $F_M =eS(\frac{x}{e},\frac{y}{e})\leq e$.
\end{proof}

Let $S$ and $T$ be a $t$-conorm and a $t$-norm, $F_M$ denoted the nullnorms $F$ with absorbing element $k$ as follows
$$
F(x,y)=\left\{
\begin{array}{lcl}
	kS(\frac{x}{k},\frac{y}{k}),& &(x,y)\in[0,k]^2,\\
	(1-k)T_M(\frac{x-k}{1-k},\frac{y-k}{1-k})+k,	& &(x,y)\in (k,1]^2,\\
	k,	& &otherwise.
\end{array}
\right.
$$

\begin{proposition}\label{F}
	A fuzzy subset $\mu$ is a $F_M$-fuzzy $t$-subnorm of $([0,1],T_M)$ where $F_M$ is a nullnorm with absorbing element $k$ if and only if $\mu(1)=1$, $\mu(x)\geq k $ for every $x\in[0,1]$.
\end{proposition}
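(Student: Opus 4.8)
The plan is to prove the equivalence in both directions, invoking Proposition~\ref{24} for necessity and Proposition~\ref{17} for sufficiency; the bridge between the two is the observation that $F_M$ reduces to the minimum once both of its arguments are at least $k$. I would record this reduction first. For $x,y\in(k,1]$ a direct computation gives $(1-k)T_M(\tfrac{x-k}{1-k},\tfrac{y-k}{1-k})+k=\min(x,y)$, while for the remaining points of $[k,1]^2$ (those with at least one coordinate equal to $k$) the value of $F_M$ is $k$: the mixed case falls in the ``otherwise'' branch, and the diagonal point $(k,k)\in[0,k]^2$ gives $kS(1,1)=k$. Since $k$ is also the minimum of the two coordinates there, we obtain $F_M(u,v)=\min(u,v)$ for all $u,v\in[k,1]$.

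For necessity, suppose $\mu$ is an $F_M$-fuzzy $t$-subnorm of $([0,1],T_M)$. The boundary condition with identity $e=1$ gives $\mu(1)=1$ at once, and $\mu(x)\ge k$ for every $x$ is exactly the conclusion of Proposition~\ref{24} applied to this nullnorm and the monoid $([0,1],T_M)$. This establishes the two asserted conditions.

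For sufficiency, assume $\mu(1)=1$ and $\mu(x)\ge k$ for all $x\in[0,1]$. Then $(\mu(x),\mu(y))\in[k,1]^2$ for every $x,y$, so by the reduction above $F_M(\mu(x),\mu(y))=\min(\mu(x),\mu(y))$, and the defining inequality becomes $\min(\mu(x),\mu(y))\le\mu(\min(x,y))$. The latter holds for any fuzzy subset whatsoever, because $\min(x,y)$ equals $x$ or $y$ and $\min(\mu(x),\mu(y))$ is bounded above by both $\mu(x)$ and $\mu(y)$; this is precisely the content of Proposition~\ref{17}. Combined with $\mu(1)=1$, both defining conditions of an $F_M$-fuzzy $t$-subnorm are met.

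I expect the only genuinely delicate point to be the bookkeeping in the reduction step: one must check that $F_M$ agrees with $\min$ on the \emph{closed} square $[k,1]^2$, which forces a separate look at the points where a coordinate equals $k$ (the mixed case and the corner $(k,k)$), rather than only on the open square $(k,1]^2$ where the formula is transparent. Once this is settled, both implications follow immediately from the cited propositions.
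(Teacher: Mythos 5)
Your proof is correct, and its two pillars coincide with the paper's: necessity comes from the identity condition $\mu(e)=1$ (here $e=1$) together with Proposition~\ref{24}, and sufficiency ultimately rests on the fact, recorded as Proposition~\ref{17}, that every fuzzy subset satisfies $\min(\mu(x),\mu(y))\le\mu(\min(x,y))$ because $\min(x,y)\in\{x,y\}$. Where you genuinely differ is in how sufficiency is organized. The paper mirrors the three-branch definition of $F_M$ and verifies the inequality branch by branch: on $[0,k]^2$ it bounds $kS(\cdot,\cdot)\le k\le\mu(T_M(x,y))$, on $[k,1]^2$ it computes $F_M=\min$, and it dismisses the mixed case where $F_M=k$. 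You instead prove a single clean reduction, $F_M(u,v)=\min(u,v)$ for all $u,v\in[k,1]$, and then note that the hypothesis $\mu\ge k$ confines every value pair $(\mu(x),\mu(y))$ to the closed square $[k,1]^2$, so the casework collapses to one application of Proposition~\ref{17}. This buys both economy and precision: your explicit check at the boundary (the mixed points with one coordinate equal to $k$ land in the ``otherwise'' branch, and the corner $(k,k)$ gives $kS(1,1)=k$, using $S(1,1)=1$) repairs a real sloppiness in the paper, whose cases are indexed by $(x,y)\in[0,k]^2$, $(x,y)\in[k,1]^2$, etc., even though the arguments fed to $F_M$ are $(\mu(x),\mu(y))$, and whose third case contains the typo $\mu(\mu(x),\mu(y))$ where $\mu(T_M(x,y))$ is meant. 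Under the standing hypothesis the paper's first case can in fact only occur at $\mu(x)=\mu(y)=k$, which your formulation makes transparent rather than leaving implicit. The one point neither you nor the paper addresses is the degenerate values $k\in\{0,1\}$, where the displayed formula for $F_M$ involves division by $k$ or $1-k$; this is a defect of the definition as printed, not of your argument.
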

\begin{proof}
	Firstly, if  $\mu$ is a $F_M$-fuzzy $t$-subnorm of $([0,1],T_{\textnormal{min}})$, then we obviously have  $\mu(1)=1$ and $\mu(x)\geq k$ for every $x\in[0,1]$.
	
	Conversely, if $\mu(1)=1$, $\mu(x)\geq k $ for every $x\in[0,1]$, then we need to categorize the discussion.
	\begin{enumerate}[(1)]
		\item   $(x,y)\in [0,k]^2$,\\ $F_M(\mu(x),\mu(y)) =kS(\frac{\mu(x)}{k},\frac{\mu(y)}{k})\leq k \leq \mu(T_M(x,y))$.
		\item $(x,y)\in [k,1]^2$, \\$F_M(\mu(x),\mu(y)) =	(1-k)T_M(\frac{\mu(x)-k}{1-k},\frac{\mu(y)-k}{1-k})+k =\textnormal{min}(\mu(x),\mu(y)) \leq \mu(T_M(x,y))$.
		\item In the other cases, $ F_M(\mu(x),\mu(y))\leq \mu(\mu(x),\mu(y))$ clearly holds.
	\end{enumerate}
\end{proof}
\begin{proposition}
	A fuzzy subset $\mu$ is a $F_M$-fuzzy $t$-subnorm of $([0,1],S_M)$, where $F_M$ is a nullnorm with absorbing element $k$ if and only if $\mu(0)=1$, $\mu(x)\geq k $ for every $x\in[0,1]$.
\end{proposition}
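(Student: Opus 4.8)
The plan is to prove both implications of the equivalence, regarding the structure as the monoid $([0,1],S_M)$ whose operation is $S_M(x,y)=\max(x,y)$ and whose identity element is $0$ (since $S_M(x,0)=\max(x,0)=x$). Unwinding the definition of an $F_M$-fuzzy submonoid, the statement to establish is that $\mu$ satisfies (i) $\mu(0)=1$ and (ii) $F_M(\mu(x),\mu(y))\leq \mu(\max(x,y))$ for all $x,y\in[0,1]$ if and only if $\mu(0)=1$ and $\mu(x)\geq k$ for every $x$.

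For the forward direction I would proceed directly: condition (i) already gives $\mu(0)=1$, and the bound $\mu(x)\geq k$ for every $x$ is exactly the content of Proposition \ref{24} applied to the monoid $([0,1],S_M)$, whose identity element is $0$. Hence this direction needs no new computation.

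For the converse I would assume $\mu(0)=1$ and $\mu(x)\geq k$ for all $x$, and verify conditions (i) and (ii). Condition (i) is immediate. The key observation for (ii) is that, since $\mu(x),\mu(y)\in[k,1]$, the pair $(\mu(x),\mu(y))$ always lies in the square $[k,1]^2$, so only the second branch of $F_M$ is ever active. Because the inner operation on that branch is $T_M=\min$, the nullnorm collapses to the minimum:
\begin{align*}
F_M(\mu(x),\mu(y)) &= (1-k)\,T_M\left(\frac{\mu(x)-k}{1-k},\frac{\mu(y)-k}{1-k}\right)+k \\
&= \min(\mu(x),\mu(y)).
\end{align*}
It then remains to check $\min(\mu(x),\mu(y))\leq \mu(\max(x,y))$, which holds trivially in each of the cases $x\leq y$ and $x>y$, since $\min(\mu(x),\mu(y))$ is bounded above by whichever of $\mu(x),\mu(y)$ corresponds to $\max(x,y)$.

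The argument is structurally parallel to Proposition \ref{F}, with $T_M$ replaced by $S_M$ and the identity element $1$ replaced by $0$. I do not expect any genuine obstacle here; the only step demanding care is recognizing that the hypothesis $\mu(x)\geq k$ confines all membership values to the upper square $[k,1]^2$, where $F_M$ reduces to the minimum, after which the defining inequality becomes a tautology.
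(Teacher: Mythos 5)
Your proof is correct and follows essentially the same route as the paper, whose own proof is simply ``similar to Proposition~\ref{F}'': you adapt that argument with $T_M$ replaced by $S_M$ and the identity $1$ replaced by $0$, even streamlining it by noting that $\mu(x)\geq k$ confines the membership values to $[k,1]^2$ (the paper's case split over $[0,k]^2$ and the mixed region then becomes vacuous or trivial). One pedantic point: the min-branch of $F_M$ is defined on $(k,1]^2$, so when some $\mu(x)=k$ the ``otherwise'' (or lower-square boundary) branch applies instead, but it still returns $k=\min(\mu(x),\mu(y))$, so your identity $F_M(\mu(x),\mu(y))=\min(\mu(x),\mu(y))$ on all of $[k,1]^2$ stands.
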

\begin{proof}
	The proof is similar to Proposition~\ref{F}.
\end{proof}

\section{Conclusion}\label{SEC7}
In this paper, we fuzzified the properties of $t$-norms, such as strict monotonicity, cancellation law, conditional law, Archimedean and limit property to fuzzy $t$-norms so that  we can analyze and classify the fuzzy $t$-norms.  Just as strict monotonicity leads to the cancellation law and the fuzzy cancellation law  deduce the  fuzzy conditional cancellation law  in the $t$-norms, we get the same results after extending the properties.  For the same purpose, vague properties of  strict monotonicity, cancellation law, conditional cancellation law are proposed for vague $t$-norms. And then, we generalize the related properties to bounded lattice.
In addition, we defined the concepts of fuzzy monoids by aggregate functions, uninorms and nullnorms. Then we analyze some of their features such as the structure of core and the constraints of fuzzy monoids.  At the same time, some important examples are analyzed to facilitate a more intuitive understanding.

The discussion in this article can be directly generalized for $t$-conorm. In the future, the fuzzified properties of $t$-norms and $t$-conorms will be further explored.

\section*{Acknowledgement}

This research was supported by the National Natural Science Foundation of China (Grant no. 12101500), the Chinese Universities Scientific Fund (Grant no. 2452018054) and the College Students' Innovation and Entrepreneurship Training Program (Grant no. S202010712009).


\end{document}